\documentclass[11pt]{article}

\usepackage[english]{babel}
\usepackage[utf8]{inputenc}

\usepackage{amsmath,amssymb,amsthm}
\usepackage[linktocpage,unicode]{hyperref}

\usepackage{tikz,rotating}

\usepackage{a4wide}
\usepackage{caption}
\usepackage{mathabx}

\hypersetup{
    pdftoolbar=true,        
    pdfmenubar=true,        
    pdffitwindow=false,     
    pdfstartview={FitH},    
    pdftitle={Δ-cumulants in terms of moments},
    pdfauthor={Matthieu Josuat-Vergès},
    pdfsubject={},
    pdfkeywords={},
    pdfnewwindow=true,      
    colorlinks=true,       
    linkcolor=red,          
    citecolor=blue,        
    filecolor=magenta,      
    urlcolor=cyan,           
    unicode=true          
}

\newtheorem{theo}{Theorem}[section]

\newtheorem{lemm}[theo]{Lemma}
\newtheorem{prop}[theo]{Proposition}

\theoremstyle{definition}
\newtheorem{defi}[theo]{Definition}
\newtheorem{rema}[theo]{Remark}

\newcommand{\inv}{\langle -1 \rangle}

\DeclareMathOperator{\NC}{NC}
\DeclareMathOperator{\IN}{IN}
\newcommand{\ST}{\mathcal{S}}
\newcommand{\PST}{\mathcal{S}'}
\DeclareMathOperator{\wt}{wt}
\DeclareMathOperator{\std}{std}
\DeclareMathOperator{\arc}{arc}
\DeclareMathOperator{\cov}{cov}
\DeclareMathOperator{\intern}{int}
\DeclareMathOperator{\Res}{Res}

\newcommand{\dconv}{\mathrel{\boxtriangleup}}
\newcommand{\mconv}{\mathrel{\boxvoid}}

\title{$\Delta$-cumulants in terms of moments}

\author{Matthieu Josuat-Vergès  \thanks{Supported by ANR CARMA (ANR-12-BS01-0017).}  \\
 CNRS and Institut Gaspard Monge \\
 Université Paris-Est Marne-la-Vallée \\
 France \\
 \small \tt matthieu.josuat-verges@u-pem.fr
}

\date{}

\usetikzlibrary{arrows}

\begin{document}
      
\maketitle

\begin{abstract}
The $\Delta$-convolution of real probability measures, introduced by Bożejko,
generalizes both free and boolean convolutions. It is linearized by the
$\Delta$-cumulants, and Yoshida gave a combinatorial formula for moments
in terms of $\Delta$-cumulants, that implicitly defines the latter.
It relies on the definition of an appropriate weight on noncrossing partitions.
We give here two different expressions for the $\Delta$-cumulants: the first one
is a simple variant of Lagrange inversion formula, and the second one is
a combinatorial inversion of Yoshida's formula involving Schröder trees.
\end{abstract}

\tikzset{every picture/.style={scale=0.6}  }

\section{Introduction}

The additive convolution $\mu\ast\nu$ of two real probability measures $\mu$ and $\nu$ is usually defined as the law of 
$X+Y$ where $X$ and $Y$ are  two independent random variables of law $\mu$ and $\nu$, respectively. 
Other operations, that we can see as deformed convolutions, are obtained by replacing the classical notion of independence with 
other ones coming from noncommutative probability theories. Two important examples are the free convolution $\mu\boxplus\nu$
(see Voiculescu \cite{voiculescu}) and the boolean convolution $\mu\biguplus\nu$ (see Speicher and Woroudi \cite{speicherworoudi}).

The $\Delta$-convolution was introduced by Bożejko \cite{bozejko} as a special case of the conditionally free convolution 
from \cite{bozejkoleinertspeicher}, and further studied in \cite{bozejkokrystekwojkowski,krystekyoshida,yoshida}.
See \cite{lehner} for the general context.
This operation, denoted $\boxtriangleup\,$, depends on another measure $\omega$ and specializes at 
$\boxplus$ (respectively, $\biguplus$) when $\omega$ is the Dirac distribution at $1$ (respectively, $0$).
It can be defined analytically as follows. First, $\mu$ is characterized its Cauchy transform:
\[
   G_\mu(z) = \int\frac{1}{z-x} \mu( {\rm d}x ).
\]
Then, the function $R^{\Delta}_{\mu}(z)$ is implicitly defined by:
\begin{equation} \label{rdtransform}
   G_\mu(z) = \frac{1}{z - R^{\Delta}_{\mu}( G_{\mu \,\mconv\, \omega} (z) ) } 
\end{equation}
where $\mu\mconv\omega$ is the multiplicative convolution of $\mu$ and $\omega$
(defined like the additive convolution above but with $XY$ instead of $X+Y$).
This function $R^{\Delta}_{\mu}(z)$ also characterizes the measure $\mu$, and it is called its $R^\Delta$-transform.
This is a deformation of Voiculescu's $R$-transform (itself being the analog of the logarithm of the Fourier transform in 
classical probability), see \cite{voiculescu}. Then $\dconv$ is characterized by the fact that it is linearized by 
the $R^\Delta$-transform:
\[ 
  R^{\Delta}_{ \mu \,\dconv\, \nu }(z) = R^{\Delta}_{ \mu}(z) + R^{\Delta}_{ \nu } (z).  
\]
See \cite{yoshida} for details.

Rather than the functional equation in \eqref{rdtransform}, one can consider the relations between the 
moments $M_n(\mu)$ and $\Delta$-cumulants $C_n(\mu)$, which are the coefficients in the following expansions,
when they exist:
\[
   G_\mu(z) = \sum_{n=0}^{\infty}  \frac{M_n(\mu)}{z^{n+1}}, \qquad
  R^{\Delta}_{\mu}(z) = \sum_{ n = 1 }^{ \infty }  C_n(\mu) z^{n-1}
\]
near $z=\infty$ and $z=0$, respectively. Alternatively, we have $M_n(\mu) = \int x^n \mu({\rm d}x)$. These relations 
depend on the moments of $\omega$, denoted $\delta_n = M_n(\omega)$, that we also assume to exist.
Yoshida \cite{yoshida} proved that for an appropriate weight function $\wt$ on the set $\NC_n$ of noncrossing partitions
of $\{1,\dots,n\}$ (defined in the next section), we have:
\begin{align} \label{relMC}
  M_n(\mu) = \sum_{\pi \in \NC_n} C_{\pi}(\mu) \wt(\pi), \quad \text{ where } C_{\pi}(\mu) = \prod_{ B \in \pi } C_{\# B}(\mu).
\end{align}
This generalizes the free and boolean cases, where we have an unweighted sum over noncrossing partitions, and interval partitions,
respectively. But in the weighted case of \eqref{relMC}, inverting the relation cannot be done via a Möbius inversion of a poset, 
since the weight $\wt(\pi)$ depends on the $\delta_i$'s.

In this work, we provide two different expressions for the $\Delta$-cumulants. The first one (Theorem~\ref{theolagrange}) is 
based on the functional equation in \eqref{rdtransform}, and is a variant of Lagrange inversion formula (see \cite{comtet})
where a Hadamard product is involved. The second one (Theorem~\ref{theotrees}) is a combinatorial formula that is the inverse 
of \eqref{relMC}, proved by inverting a matrix which is the multiplicative extension of Yoshida's weight.
The solution is in terms of Schröder trees, and relies on related notions taken from \cite{josuatvergesmenousnovellithibon}. 
In that work, Schröder trees appeared naturally because the relations between moments and free cumulants are interpreted in the group of 
an operad of trees, or also in terms of characters of Hopf algebras of trees (building on \cite{ebrahimifardpatras1,ebrahimifardpatras2}). 
However, we don't have such an algebraic interpretation for the case of $\Delta$-cumulants.

\section{Definitions}

When $\pi$ is a set partition of a set $X$, we denote $\overset{\pi}{\sim}$ the equivalence relation defined by
$ i \overset{\pi}{\sim} j $ iff $i,j\in B$ for some block $B\in\pi$.

%

Let $\NC_n$ denote the set of {\it noncrossing partitions} of $\{1,\dots,n\}$, i.e.~set partitions of $\{1,\dots,n\}$ where there 
exist no $i,j,k,l$ such that $i<j<k<l$, $i \overset{\pi}{\sim} k$, $j \overset{\pi}{\sim} l$ and $j \overset{\pi}{\nsim} k$.
For example, $\{\{1,4,6\},\{2,3\},\{5\}\} \in\NC_6$. To lighten the notation, the same is written $146|23|5$, and it 
is represented as:
\begin{equation} \label{ncexample}
  \begin{tikzpicture}
   \tikzstyle{ver} = [circle, draw, fill, inner sep=0.5mm]
   \tikzstyle{edg} = [line width=0.6mm]
   \node      at (1,-0.5) {\small 1};
   \node      at (2,-0.5) {\small 2};
   \node      at (3,-0.5) {\small 3};
   \node      at (4,-0.5) {\small 4};
   \node      at (5,-0.5) {\small 5};
   \node      at (6,-0.5) {\small 6};
   \pgfresetboundingbox
   \node[ver] at (1,0) {};
   \node[ver] at (2,0) {};
   \node[ver] at (3,0) {};
   \node[ver] at (4,0) {};
   \node[ver] at (5,0) {};
   \node[ver] at (6,0) {};
   \draw[edg] (1,0) .. controls (2.5,1.7) .. (4,0);
   \draw[edg] (4,0) .. controls (5,1) .. (6,0);
   \draw[edg] (2,0) .. controls (2.5,0.5) .. (3,0);
  \end{tikzpicture}\; .
\end{equation}
\vspace{-2mm}

Endowed with the reverse refinement order, $\NC_n$ is a lattice, first defined by Kreweras~\cite{kreweras}.
Its minimal element is $0_n=1|2|\dots|n$,  and its maximal element is $1_n=12 \dots n$.

Let $\arc(\pi)$ denote the set of {\it arcs} of $\pi\in\NC_n$, i.e.~pairs $(i,j)$ such that $i<j$, $i \overset{\pi}{\sim}j$,
and there is no $k$ such that $i<k<j$ and $i \overset{\pi}{\sim}j \overset{\pi}{\sim}k$.
They are indeed arcs in the graphical representations as in \eqref{ncexample}, for example those of $146|23|5$ are
$(1,4)$, $(4,6)$, $(2,3)$.
Note that there are $\#B-1$ arcs inside a block $B$ of $\pi$, and it follows that $\#\arc(\pi) + \# \pi = n$.

\begin{defi}
Yoshida's weight $\wt(\pi)$ of $\pi\in\NC_n$ is:
\begin{align}  \label{defwt}
   \wt(\pi) = \prod_{ (i,j) \,\in\, \arc(\pi) } \delta_{j-i-1}.
\end{align}
\end{defi}

\begin{rema}
If we allow $\omega$ to be a positive measure (i.e.~$\delta_0=M_0(\omega)$ is any positive number instead of $\delta_0=1$
for a probability measure), we see in Equation \eqref{relMC} that $M_n(\mu)$ is homogeneous of degree $n$ in $C_1(\mu),C_2(\mu),\dots$ 
and $\delta_0,\delta_1,\dots$ (by the relation $\#\pi + \# \arc(\pi)=n$).
So there is no loss of generality when we assume $\delta_0 = 1$.
\end{rema}

Let $\IN_n\subset \NC_n$ denote the set of {\it interval partitions} of $\{1,\dots,n\}$, i.e.~set partitions where each block is an 
interval of consecutive integers. Equivalently, $\pi\in\NC_n$ is in $\IN_n$ iff it has no arc $(i,j)$ with $j-i\geq 2$.

In the free case ($\delta_i=1$ for all $i$), we have $\wt(\pi)=1$ for all $\pi\in\NC_n$. So Equation~\eqref{relMC}
is the known relation for free cumulants \cite{speicher}. In the boolean case ($\delta_1=1$ and $\delta_i=0$ for $i\geq2$),
we have $\wt(\pi)=1$ if $\pi\in\IN_n$ and $0$ otherwise. So Equation~\eqref{relMC} is the known relation for boolean
cumulants \cite{speicherworoudi}.


The {\it Hadamard product} $\odot$ of two series is defined by:
\begin{align*}
  \Big( \sum a_n z^n \Big) \odot \Big( \sum b_n z^n \Big) = \Big( \sum a_n b_n  z^n \Big).
\end{align*}
This operation makes sense either for formal power series, or functions that are analytic at a specified point.
If two measures $\mu$ and $\nu$ have all their moments, their Cauchy transforms are analytic near $z=\infty$,
and we have:
\begin{equation} \label{hadamardcauchy}
   G_{\mu \,\mconv\,  \nu }(z) = G_{\mu}(z) \odot G_{\nu}(z).
\end{equation}
Indeed, let $X$ and $Y$ be two independent random variables of law $\mu$ and $\nu$, respectively. Then we have
$ \mathbb{E}[(XY)^n] = \mathbb{E}[ X^n Y^n ] = \mathbb{E}[X^n] \mathbb{E}[Y^n]$, 
so $M_n(\mu \mconv \nu) = M_n(\mu) M_n(\nu)$.

From now on, we write $M_n$ for the moments and $C_n$ for the $\Delta$-cumulants, dropping the dependence in $\mu$,
and consider their generating functions:
\begin{align*}
  M(z) = \sum_{n\geq 0 } M_n z^{n+1}, \qquad C(z) = \sum_{n\geq 1} C_n z^{n-1}.
\end{align*}
And to avoid confusion, we take specific notations for the two specializations of $C_n$:
$F_n$ and $B_n$ are respectively the free cumulants and boolean cumulants associated with $M_n$.
Their generating functions are:
\[
   F(z) = \sum_{n\geq 1} F_n z^{n-1}, \qquad  B(z) = \sum_{n\geq 1} B_n z^{n-1}.
\]
Moreover, the generating function of the moments of $\omega$ is similar to $M(z)$:
\[
  \Delta(z) = \sum_{n\geq 0 } \delta_n z^{n+1}.
\]
In fact, since our results are essentially of algebraic or combinatorial nature, we don't need to assume that 
$(C_n)$ or $(\delta_n)$ is the moment sequence of some measure, we can treat them as formal variables and their
generating functions as formal power series. In this setting, $(M_n)$ and $(C_n)$ are related by \eqref{relMC}
if and only if their generating functions are related by:
\begin{align} \label{relMCz}
  M(z) = \frac{ z }{ 1 - z C( M(z)\odot \Delta(z) ) }.
\end{align} 
This is a rewriting of \eqref{rdtransform}, using \eqref{hadamardcauchy} and changing $z$ to $z^{-1}$.

In particular, in the free case we have $M(z)\odot \Delta(z) = M(z)$, and the relation between $M(z)$ and $F(z)$
is the definition of Voiculescu's $R$-transform \cite{voiculescu}:
\begin{equation}  \label{relMF}
  M(z) = \frac{ z }{ 1 - zF(M(z)) }.
\end{equation}
And in the boolean case, $M(z)\odot \Delta(z) = z$, and we recover the analytic definition of boolean cumulants \cite{speicherworoudi}:
\begin{equation} \label{relMB}
  B(z) = \frac{1}{z} - \frac{1}{M(z)}.
\end{equation}

\section{Lagrange inversion for cumulants}

In this section, we denote by $[z^k] g(z)$ the coefficient of $z^k$ in a formal Laurent series $g(z)$.
A formal power series $f(z) = \sum_{n\geq 1}  a_n z^n $ with $a_1 \neq 0$ has a unique compositional inverse
$f^{\inv}(z)$, such that $f( f^{\inv} (z) ) = f^{\inv}( f(z) ) = z $.
Lagrange inversion formula is the identity:
\begin{equation} \label{lagrange}
   [z^n] f^{\inv}(z) = \frac 1n [z^{n-1}] \Big(\frac{ z  }{ f(z) }\Big)^n.
\end{equation}
It comes in a wide range of different forms and has a lot of variants and generalizations,
see Comtet's book \cite[Chapter~III]{comtet}. 
Let us review how to use it in the case of free cumulants, following Speicher~\cite{speicher}.
From \eqref{relMF}, we get:
\[
  M(z) - zM(z)F(M(z)) = z,
\]
and then:
\[
  \frac{ M(z) }{ 1 + M(z)F(M(z))  } = z, 
\]
i.e.~$M^{\inv}(z) = \frac{z}{1+zF(z)} $. Applying \eqref{lagrange} gives
\[
  M_n = \frac{1}{n+1} [z^n] \big( 1 + zF(z) \big)^{n+1}.
\]

Another identity is Hermite's formula \cite[p. 150, Theorem D)]{comtet}:
\[
  [z^n] \frac{z}{ f^{\inv}(z) } = [z^n] f'(z) \Big(\frac{ z }{ f(z) }\Big)^n,
\]
from which we get:
\[
  F_n = [z^n] M'(z) \Big(\frac{z}{M(z)}\Big)^n. 
\]
One might prefer a formula only involving $M(z)$ and not its derivative, and this is possible at the condition 
of working with Laurent series. Indeed, the previous formula also gives:
\[
  F_n = [z^0]  \frac{M'(z)}{M(z)^n},
\]
and since $\frac{M'(z)}{M(z)^n} = - \frac{1}{n-1}  (\frac{1}{M(z)^{n-1}})'$ for $n\geq2$, we have:
\begin{equation} \label{lagrangeFn}
  F_n = - \frac{1}{n-1}  [ z ] \frac{1}{M(z)^{n-1}}.
\end{equation}

In the case of our series related by Equation~\eqref{relMCz}, we can adapt a classical proof of Lagrange inversion formula to get the following:

\begin{theo} \label{theolagrange}
For $n\geq2$, the $n$th $\Delta$-cumulant is given by:
\begin{equation} \label{lagrangeCn}
 C_n = \frac{1}{n-1} [z^{-1}] \frac{  \dfrac{M'(z)}{M(z)^2}  -  \dfrac{1}{z^2}  }{  \big( M(z)\odot \Delta(z) \big)^{n-1}   }.
\end{equation}
\end{theo}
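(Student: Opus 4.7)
The plan is to cast \eqref{relMCz} as a compositional-inversion problem and then apply a residue-calculus variant of Lagrange inversion. Set $H(z) = M(z) \odot \Delta(z)$; since $\delta_0 = M_0 = 1$, we have $H(z) = z + O(z^2)$, so $H$ admits a compositional inverse $H^{\inv}$ as a formal power series. First I would solve \eqref{relMCz} algebraically for $C(H(z))$, rewriting it in the equivalent form
\[
  C(H(z)) = \frac{1}{z} - \frac{1}{M(z)},
\]
so that $C(w)$ is obtained by substituting $z = H^{\inv}(w)$ on the right-hand side.

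Next I would extract $C_n = [w^{n-1}] C(w) = [w^{-1}] w^{-n} C(w)$ and change variables back to $z$ via $w = H(z)$. The standard formal-residue identity $[w^{-1}] g(w) = [z^{-1}] g(H(z))\, H'(z)$, valid whenever $H(z) = z + O(z^2)$, yields
\[
   C_n = [z^{-1}] \frac{H'(z)}{H(z)^n}\Big(\frac{1}{z} - \frac{1}{M(z)}\Big).
\]

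The final step, valid for $n\ge 2$, is to rewrite $\frac{H'(z)}{H(z)^n} = -\frac{1}{n-1}\frac{d}{dz}\bigl(H(z)^{1-n}\bigr)$ and then apply the formal integration-by-parts identity $[z^{-1}] f'g = -[z^{-1}] fg'$, which holds because $[z^{-1}](fg)' = 0$ for any formal Laurent series $f,g$. Transferring the derivative off $H(z)^{1-n}$ and onto $\frac{1}{z} - \frac{1}{M(z)}$ produces exactly the numerator $\frac{M'(z)}{M(z)^2} - \frac{1}{z^2}$ appearing in \eqref{lagrangeCn}.

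I expect no deep obstacle: the only real delicacy is checking that each manipulation is legal in the ring of formal Laurent series. One needs $H$ to be compositionally invertible (which is where the normalization $\delta_0 = 1$ enters), $1/M(z)$ to make sense as a Laurent series (granted by $M_0 = 1$), and both the residue change-of-variables and the integration-by-parts identities to apply to the Laurent expansions at hand. As a sanity check I would specialize to the free case $\Delta(z) = z/(1-z)$, where $H(z) = M(z)$ and the contribution $[z^{-1}] M'(z)/M(z)^{n+1}$ vanishes, so that the formula collapses to \eqref{lagrangeFn}, matching Speicher's derivation.
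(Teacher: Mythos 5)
Your proposal is correct and takes essentially the same route as the paper: both start by rewriting \eqref{relMCz} as $C\big(M(z)\odot\Delta(z)\big)=\frac{1}{z}-\frac{1}{M(z)}$ and then run a formal residue computation whose only inputs are $[z^{-1}]f'(z)=0$ and $[z^{-1}]\,H'(z)/H(z)=1$ for $H(z)=M(z)\odot\Delta(z)=z+O(z^2)$. The paper differentiates the identity, divides by $H(z)^k$ and kills all terms that are total derivatives, whereas you package the same two facts as the residue change-of-variables formula followed by integration by parts; this is only a cosmetic rearrangement of the same argument.
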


\begin{proof}
From \eqref{relMCz}, we have:
\[
  \frac{ M(z)-z }{ zM(z) } = \sum_{n\geq1} C_n \big( M(z)\odot \Delta(z) \big)^{n-1}.
\]
Taking the derivative, we have:
\[
  -\frac{1}{z^2} + \frac{M'(z)}{M(z)^2} =  \sum_{n\geq2} (n-1) C_n \big( M(z)\odot \Delta(z) \big)' \big( M(z)\odot \Delta(z) \big)^{n-2}. 
\]
Divide on both sides by $ \big( M(z)\odot \Delta(z) \big)^{k} $ to get:
\[
  \frac{  \dfrac{M'(z)}{M(z)^2}  -  \dfrac{1}{z^2}  }{  \big( M(z)\odot \Delta(z) \big)^{k}   }
  =  
  \sum_{n\geq2} (n-1) C_n \big( M(z)\odot \Delta(z) \big)' \big( M(z)\odot \Delta(z) \big)^{n-k-2}. 
\]
Then, take the coefficient of $z^{-1}$. To deal with the right hand side, note that 
if $n \neq k+1$, we have:
\[
  \big( M(z)\odot \Delta(z) \big)' \big( M(z)\odot \Delta(z) \big)^{n-k-2} = \frac{1}{n-k-1} \big( (M(z)\odot \Delta(z)) ^{n-k-1} \big)'.
\]
Since $[z^{-1}] f'(z) = 0 $ for any Laurent series $f(z)$, it remains:
\[
  [z^{-1}] \frac{  \dfrac{M'(z)}{M(z)^2}  -  \dfrac{1}{z^2}  }{  \big( M(z)\odot \Delta(z) \big)^{k}   }
  =  
  k C_{k+1} [z^{-1}] \big( M(z)\odot \Delta(z) \big)' \big( M(z)\odot \Delta(z) \big)^{-1}.
\]
From $ M(z)\odot \Delta(z) = z + O(z^2)$, we easily obtain  
$ [z^{-1}] \big( M(z)\odot \Delta(z) \big)' \big( M(z)\odot \Delta(z) \big)^{-1}=1$.
We thus obtain a formula for $k C_{k+1}$ and Equation~\eqref{lagrangeCn} follows.
\end{proof}

We end this section by a few remarks about the previous theorem.
In the boolean case, we have $\Delta(z)=z$ and $ M(z)\odot\Delta(z) = z$, so it says:
\[
  (n-1) B_n = [ z^{-1} ] \Big( \frac{M'(z)}{ z^{n-1} M(z)^2} - \frac{1}{z^{n+1}} \big) = [z^{n-2}] \frac{M'(z)}{M(z)^2}.
\]
After multiplying by $z^{n-2}$ and summing for $n\geq2$, we get:
\[
  B'(z) = \frac{M'(z)}{M(z)^2} - \frac{1}{z^2}
\]
where the term $-\frac{1}{z^2}$ is needed to remove negative powers of $z$ from $\frac{M'(z)}{M(z)^2}$.
This agrees with the analytic definition of boolean cumulants in \eqref{relMB}.

In the free case, $ M(z)\odot\Delta(z) = M(z)$, so we get:
\[
  F_n = \frac{1}{n-1} [z^{-1}] \Big( \frac{ M'(z)  }{ M(z)^{n} } - \frac{1}{z^2M(z)^{n-1}}  \Big).
\]
Since $\frac{ M'(z)  }{ M(z)^{n} } = ( - \frac{1}{(n-1)M(z)^{n-1}} )'$, we have $ [z^{-1}] \frac{ M'(z)  }{ M(z)^{n} } = 0 $. 
So the formula gives
\[
  F_n = - \frac{1}{n-1} [z^{-1}]  \frac{1}{z^2M(z)^{n-1}}  
\]
and we recover \eqref{lagrangeFn}.

It is worth writing the previous theorem in a more analytic way, using Cauchy transforms. We have:
\[
  (n-1) C_n = [z^{-1}] \frac{  \dfrac{M'(z)}{M(z)^2}  -  \dfrac{1}{z^2}  }{  \big( M(z)\odot \Delta(z) \big)^{n-1}   }
            = [ z ] \frac{  \dfrac{M'(\frac 1z)}{M(\frac 1z)^2}  - z^2 }{  \big( M(\frac 1z)\odot \Delta(\frac 1z) \big)^{n-1}   }
            = [z^{-1}] \frac{  \dfrac{M'(\frac 1z)}{z^2M(\frac 1z)^2}  - 1 }{  \big( M(\frac 1z)\odot \Delta(\frac 1z) \big)^{n-1}   }.
\]
Since $M(\frac 1z) = G_{\mu}(z) $, and $\Delta(\frac 1z)=G_{\omega}(z)$, this gives:
\[
  (n-1) C_n = [z^{-1}] \frac{ - \dfrac{G_{\mu}'(z)}{ G_{\mu}(z)^2 } - 1 }{ (G_{\mu}(z) \odot G_{\omega}(z))^{n-1} }.
\]
For a function which is analytic near $z=\infty$, its residue at $z=\infty$ is given by $\Res_{\infty} f(z) = -[z^{-1}] f(z)$
and can be calculated by a contour integral. So the analytic formulation of the previous theorem is:
\[
  C_n = \frac{1}{n-1} \Res_{\infty} \frac{  \dfrac{G_\mu'(z)}{G_\mu(z)^2}  +  1  }{  G_{\mu \,\mconv\, \omega }(z)^{n-1}   }.
\]

We do not know if there exists another variant of Lagrange inversion that would give the moments $M_n$ in terms
of $C(z)$ and $\Delta(z)$. 

\section{Inverting the relation}

We now present how to inverse the relation in Equation~\eqref{relMC} to get a formula for $C_n$ in terms of $M_1,\dots,M_n$.
For small values of $n$, \eqref{relMC} gives:
\begin{align*}
   M_1 &= C_1, \\
   M_2 &= C_2 + C_1^1,  \\
   M_3 &= C_3 + (2+\delta_1) C_2 C_1 + C_1^3,  \\
   M_4 &= C_4 + (2+2\delta_1) C_3 C_1 + (1+\delta_2) C_2^2 + (3+2\delta_1+\delta_2)C_2C_1^2 + C_1^4.
\end{align*}
From that, we successively get the values:
\begin{align*}
   C_1 &= M_1, \\
   C_2 &= M_2 - M_1^1,  \\
   C_3 &= M_3 - (2+\delta_1)M_2M_1 + (1+\delta_1)M_1^3, \\
   C_4 &= \scriptstyle{M_4 - (2+2\delta_1)M_3M_1 - (1+\delta_2)M_2^2 +  
        (3 + 4\delta_1 + 2\delta_1^2 + \delta_2) M_2 M_1^2  - (1 + 2\delta_1^2 + 2\delta_1)M_1^4.}
\end{align*}
It appears that each coefficient between parentheses is a polynomial in $\delta_1,\delta_2,\dots$ with positive coefficients.
This property will be a consequence of our general formula for $C_n$.

To present the multiplicative extension of Yoshida's weight, we first need some definitions.
If $B\subset\mathbb{N}$ is finite, there is a natural notion of noncrossing partitions of $B$, using the same condition as in the 
definition of $\NC_n$ (the only property that we need is the total order on $B$). They form a lattice denoted $\NC_B$.
The unique order preserving bijection $B\to\{1, \dots , \# B \}$ induces a bijection $\std \,:\, \NC_B \to \NC_n$, called {\it standardization}.
If $\pi\in\NC_B$, its weight is defined as $\wt(\pi) = \wt(\std(\pi))$. 

Also, if $\pi,\rho\in\NC_n$ with $\pi\leq\rho$ and $B\in\rho$, we define the {\it restriction} of $\pi$ to $B$ as:
$\pi|_B = \{ C\in \pi \, : \, C \subset B \}\in\NC_B$. More generally, $\pi|_B \in \NC_B$ is well defined as soon as
$B$ is the union of some blocks of $\pi$.

\begin{defi} \label{defzeta}
The map $\zeta$ on $\NC_n ^2$ is given by: 
\begin{align}
  \zeta( \pi , \rho ) = 
  \begin{cases}
    \prod\limits_{B\in\rho} \wt(\pi|_B) & \text{ if } \pi\leq\rho, \\
    0                            & \text{ otherwise}.
  \end{cases}
\end{align}
\end{defi}

It is a refinement by the parameters $\delta_1,\delta_2, \dots$ of the poset theoretic $\zeta$ function of $\NC_n$.

\begin{prop}
If $\pi\leq\rho$, we have:
\begin{align} \label{defzeta2}
 \zeta(\pi,\rho) 
 = 
 \prod\limits_{ (i,j) \in \arc(\pi) } \delta_{\#\big\{ k \, : \, i<k<j, \text{ and } i \overset{\rho}{\sim} k \overset{\rho}{\sim} j \big\}}.
\end{align}
\end{prop}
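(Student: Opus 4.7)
The plan is to unfold both sides of the claimed identity from their definitions and match the terms arc-by-arc.

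First I would start from Definition~\ref{defzeta}, which gives $\zeta(\pi,\rho) = \prod_{B\in\rho} \wt(\pi|_B)$ when $\pi\leq\rho$. For each block $B\in\rho$, the restriction $\pi|_B$ is a noncrossing partition of $B$, and its weight is defined via standardization, namely $\wt(\pi|_B) = \wt(\std(\pi|_B))$. By \eqref{defwt} applied to $\std(\pi|_B)$, this equals a product of $\delta$'s indexed by the arcs of $\std(\pi|_B)$, equivalently by the arcs of $\pi|_B$ itself (the standardization map is an order-preserving bijection, so it induces a bijection on arcs).

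Next I would identify, for a given $B\in\rho$, the arcs of $\pi|_B$ with those arcs $(i,j)$ of $\pi$ such that $i,j\in B$. Since $\pi\leq\rho$, every block of $\pi$ lies inside a single block of $\rho$, so every arc of $\pi$ has both endpoints in exactly one block of $\rho$; this gives a partition of $\arc(\pi)$ indexed by $\rho$. Consequently, the double product $\prod_{B\in\rho}\prod_{(i,j)\in\arc(\pi|_B)}$ collapses into a single product $\prod_{(i,j)\in\arc(\pi)}$.

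The remaining step is to compute the index of the $\delta$ attached to each arc. For an arc $(i,j)$ of $\pi|_B$, its image under $\std$ is a pair $(\std(i),\std(j))$, contributing $\delta_{\std(j)-\std(i)-1}$. By definition of the standardization, $\std(j)-\std(i)-1$ is exactly the number of elements of $B$ strictly between $i$ and $j$. Since $B$ is the (unique) block of $\rho$ containing both $i$ and $j$, this count equals $\#\{k : i<k<j,\ i\overset{\rho}{\sim}k\overset{\rho}{\sim}j\}$, matching the exponent in \eqref{defzeta2}.

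There is no real obstacle here — the proof is a matter of carefully tracking definitions. The only point that requires a moment's care is verifying that standardization turns $j-i-1$ (after relabelling) into the count of elements of $B$ strictly between $i$ and $j$ in their original labelling, and that each arc of $\pi$ is attributed to exactly one block of $\rho$ so the two products can be merged without duplication or omission.
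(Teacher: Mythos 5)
Your proposal is correct and follows essentially the same route as the paper: the paper also unfolds $\zeta(\pi,\rho)=\prod_{B\in\rho}\wt(\pi|_B)$, observes (via standardization invariance) that $\wt(\pi|_B)=\prod_{(i,j)\in\arc(\pi|_B)}\delta_{\#\{k\in B\,:\,i<k<j\}}$, and merges the products over the blocks of $\rho$. The only cosmetic difference is that the paper isolates this standardization step as a separate displayed identity for arbitrary finite $B$, whereas you track it arc-by-arc inside the main computation.
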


\begin{proof}
Let us first show that for any finite $B\subset\mathbb{N}$ and $\pi\in\NC_B$, we have:
\begin{equation} \label{wtpib}
  \wt( \pi ) = \prod_{(i,j) \in \arc(\pi)} \delta_{\#\{ k\in B \,:\, i<k<j \}}.
\end{equation}
If $B = \{1,2,\dots, \#B \}$, we have $\#\{ k\in B \,:\, i<k<j \} = j-i-1$ and we recover the definition of the weight.
The right hand side of \eqref{wtpib} is clearly unchanged by the standardization process, so we get \eqref{wtpib} in general.

Let $\pi\leq \rho$ in $\NC_n$, then we have:
\[
  \zeta( \pi , \rho ) =  \prod\limits_{B\in\rho} \wt(\pi|_B) 
  = 
  \prod_{B\in\rho} \prod_{\substack{ (i,j) \in \arc(\pi) \\ i,j \in B }} \delta_{\#\{ k\in B \,:\, i<k<j \}},
\]
and we get \eqref{defzeta2}.
\end{proof}

\begin{lemm}
We have:
\begin{equation} \label{relMzetaV}
  M_{\rho} = \sum_{\substack{ \pi \in \NC_n \\  \pi \leq \rho  }}   C_{\pi}  \zeta(\pi,\rho), \qquad \text{ where } M_{\rho} = \prod_{B\in\rho} M_{\#B}.
\end{equation}
\end{lemm}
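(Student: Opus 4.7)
The plan is to reduce the lemma to Yoshida's formula \eqref{relMC} applied blockwise. First I would write, for each block $B\in\rho$, the standardized form of \eqref{relMC}:
\[
  M_{\#B} = \sum_{\sigma \in \NC_B} C_{\sigma}\,\wt(\sigma),
\]
where by the definitions above $C_{\sigma}=\prod_{C\in\sigma}C_{\#C}$ and $\wt(\sigma)=\wt(\std(\sigma))$. This is just \eqref{relMC} transported through the standardization bijection $\NC_B \to \NC_{\#B}$.

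Next, I would take the product over $B\in\rho$ and expand:
\[
  M_\rho \;=\; \prod_{B\in\rho} \sum_{\sigma_B\in\NC_B} C_{\sigma_B}\,\wt(\sigma_B)
         \;=\; \sum_{(\sigma_B)_{B\in\rho}} \, \prod_{B\in\rho} C_{\sigma_B}\,\wt(\sigma_B).
\]
The combinatorial step is the bijection between families $(\sigma_B)_{B\in\rho}$ with $\sigma_B\in\NC_B$ and noncrossing partitions $\pi\in\NC_n$ with $\pi\le\rho$: given such a family, set $\pi=\bigsqcup_{B\in\rho}\sigma_B$, and conversely given $\pi\le\rho$, take $\sigma_B=\pi|_B$. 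I would quickly argue noncrossingness in the forward direction by observing that two blocks of $\pi$ sitting in distinct blocks of $\rho$ cannot cross, since $\rho$ itself is noncrossing and constrains them to lie in nested or disjoint intervals.

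Finally, I would identify the summand: under the bijection, $\prod_{B\in\rho} C_{\sigma_B}=\prod_{C\in\pi}C_{\#C}=C_\pi$ by multiplicativity, and $\prod_{B\in\rho}\wt(\sigma_B)=\prod_{B\in\rho}\wt(\pi|_B)=\zeta(\pi,\rho)$ directly from Definition~\ref{defzeta}. Substituting gives exactly \eqref{relMzetaV}.

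I do not anticipate a real obstacle here; the only point that requires any care is the verification of the bijection and the noncrossing check, which is standard and follows purely from the definition of $\NC_n$ and of restriction. Everything else is a bookkeeping rearrangement of \eqref{relMC} over the blocks of $\rho$.
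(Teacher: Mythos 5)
Your proposal is correct and matches the paper's own proof: both apply Yoshida's formula \eqref{relMC} on each $\NC_B$, expand the product over blocks of $\rho$, and use the bijection $\pi \mapsto (\pi|_B)_{B\in\rho}$ between $\{\pi\in\NC_n : \pi\leq\rho\}$ and $\prod_{B\in\rho}\NC_B$ together with the definition of $\zeta$ as a product of weights. Nothing further is needed.
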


\begin{proof}
Using \eqref{relMC} with $\NC_B$ instead of $\NC_{\#B}$, we can write:
\[
  M_{\rho} = \prod_{B\in\rho} M_{\#B} = \prod_{B\in\rho} \Big( \sum_{\pi\in\NC_B} C_{\pi} \wt(\pi) \Big).
\]
Then we expand the product. Using the fact that the map $\pi \mapsto (\pi|_B)_{B\in\rho}$ 
is an order preserving bijection from $\{ \pi\in\NC_n \, : \, \pi \leq \rho \}$ to $\prod_{B\in\rho} \NC_B$,
and the definition of $\zeta$ as a product of weights, we get the announced formula.
\end{proof}

We can see $\zeta$ as a matrix whose rows and columns are indexed by $\NC_n$, and define its inverse $\mu = \zeta^{-1}$.
It is a refinement by the parameters $\delta_1,\delta_2, \dots$ of the Möbius function of $\NC_n$.
It follows from \eqref{relMzetaV} that:
\begin{align}  \label{cmmu}
  C_{1_n} = \sum_{ \pi \in \NC_n } M_{\pi}  \mu(\pi,1_n).
\end{align}
So it remains to make $\mu(\pi,1_n)$ explicit. 
To this end, we use some definitions taken from \cite{josuatvergesmenousnovellithibon}.
Schröder trees themselves are classical objects in combinatorics but it was showned there
that they are an alternative to noncrossing partitions for dealing with free cumulants.

\begin{defi}[cf. \cite{josuatvergesmenousnovellithibon}]
Let $\ST_n$ denote the set of {\it Schröder trees} with $n+1$ leaves, defined as plane trees where each 
internal vertex has at least $2$ descendants. Among edges issued from an internal vertex, we have a
{\it left edge}, a {\it right edge}, and other ones are called {\it middle edges}.
Let $\PST_n \subset \ST_n $ denote the set of {\it prime} Schröder trees, defined as those such that
the right edge issued from the root is attached to a leaf.
Also let $\intern(T)$ denote that set of internal vertices of a tree $T$.
\end{defi}

When drawing a tree, we take the convention that all leaves are at the same level.
For example, the Schröder trees with $3$ leaves are:
\begin{align}  \label{schoder3leaves}  
  \begin{tikzpicture}
      \tikzstyle{ver} = [circle, draw, fill, inner sep=0.5mm]
      \tikzstyle{edg} = [line width=0.6mm,<->,>=round cap,join=round]
      \draw[edg] (1,0) -- (2,1) -- (3,0);
      \draw[edg] (2,0) -- (2,1);
  \end{tikzpicture}
  , \quad
  \begin{tikzpicture}
      \tikzstyle{ver} = [circle, draw, fill, inner sep=0.5mm]
      \tikzstyle{edg} = [line width=0.6mm,<->,>=round cap,join=round]
      \draw[edg] (1,0) -- (2,1) -- (3,0);
      \draw[edg] (2,0) -- (1.5,0.5);
  \end{tikzpicture}
  , \quad
  \begin{tikzpicture}
      \tikzstyle{ver} = [circle, draw, fill, inner sep=0.5mm]
      \tikzstyle{edg} = [line width=0.6mm,<->,>=round cap,join=round]
      \draw[edg] (1,0) -- (2,1) -- (3,0);
      \draw[edg] (2,0) -- (2.5,0.5);
  \end{tikzpicture},
\end{align}
and the first $2$ only are prime. Those with $4$ leaves are:
\begin{align} \label{schoder4leaves}
  \tikzset{every picture/.style={scale=0.6}  }
  \begin{tikzpicture}
      \tikzstyle{ver} = [circle, draw, fill, inner sep=0.5mm]
      \tikzstyle{edg} = [line width=0.6mm,<->,>=round cap,join=round]
      \draw[edg] (1,0) -- (2.5,1.5) -- (4,0);
      \draw[edg] (2,0) -- (2.5,1.5) -- (3,0);
  \end{tikzpicture}
  , &\quad
  \begin{tikzpicture}
      \tikzstyle{ver} = [circle, draw, fill, inner sep=0.5mm]
      \tikzstyle{edg} = [line width=0.6mm,<->,>=round cap,join=round]
      \draw[edg] (1,0) -- (2.5,1.5) -- (4,0);
      \draw[edg] (2.5,1.5) -- (3,0);
      \draw[edg] (2,0) -- (1.5,0.5);
  \end{tikzpicture}
  , \quad
  \begin{tikzpicture}
      \tikzstyle{ver} = [circle, draw, fill, inner sep=0.5mm]
      \tikzstyle{edg} = [line width=0.6mm,<->,>=round cap,join=round]
      \draw[edg] (1,0) -- (2.5,1.5) -- (4,0);
      \draw[edg] (2,0) -- (2,1) -- (3,0);
  \end{tikzpicture}
  , \quad
  \begin{tikzpicture}
      \tikzstyle{ver} = [circle, draw, fill, inner sep=0.5mm]
      \tikzstyle{edg} = [line width=0.6mm,<->,>=round cap,join=round]
      \draw[edg] (1,0) -- (2.5,1.5) -- (4,0);
      \draw[edg] (2,0) -- (2.5,0.5) -- (3,0);
      \draw[edg] (2.5,0.5) -- (2.5,1.5);
  \end{tikzpicture}
  , \quad
  \begin{tikzpicture}
      \tikzstyle{ver} = [circle, draw, fill, inner sep=0.5mm]
      \tikzstyle{edg} = [line width=0.6mm,<->,>=round cap,join=round]
      \draw[edg] (1,0) -- (2.5,1.5) -- (4,0);
      \draw[edg] (2,0) -- (1.5,0.5);
      \draw[edg] (3,0) -- (2,1);
  \end{tikzpicture}
  , \quad
  \begin{tikzpicture}
      \tikzstyle{ver} = [circle, draw, fill, inner sep=0.5mm]
      \tikzstyle{edg} = [line width=0.6mm,<->,>=round cap,join=round]
      \draw[edg] (1,0) -- (2.5,1.5) -- (4,0);
      \draw[edg] (2,0) -- (2.5,0.5) -- (3,0);
      \draw[edg] (2.5,0.5) -- (2,1);
  \end{tikzpicture}
  ,  
  \\
  \begin{tikzpicture}
      \tikzstyle{ver} = [circle, draw, fill, inner sep=0.5mm]
      \tikzstyle{edg} = [line width=0.6mm,<->,>=round cap,join=round]
      \draw[edg] (1,0) -- (2.5,1.5) -- (4,0);
      \draw[edg] (2.5,1.5) -- (2,0);
      \draw[edg] (3,0) -- (3.5,0.5);
  \end{tikzpicture} &, \quad
  \begin{tikzpicture}
      \tikzstyle{ver} = [circle, draw, fill, inner sep=0.5mm]
      \tikzstyle{edg} = [line width=0.6mm,<->,>=round cap,join=round]
      \draw[edg] (1,0) -- (2.5,1.5) -- (4,0);
      \draw[edg] (2,0) -- (3,1);
      \draw[edg] (3,0) -- (3,1);
  \end{tikzpicture} ,\quad
  \begin{tikzpicture}
      \tikzstyle{ver} = [circle, draw, fill, inner sep=0.5mm]
      \tikzstyle{edg} = [line width=0.6mm,<->,>=round cap,join=round]
      \draw[edg] (1,0) -- (2.5,1.5) -- (4,0);
      \draw[edg] (2,0) -- (3,1);
      \draw[edg] (3,0) -- (2.5,0.5);
  \end{tikzpicture} ,\quad
  \begin{tikzpicture}
      \tikzstyle{ver} = [circle, draw, fill, inner sep=0.5mm]
      \tikzstyle{edg} = [line width=0.6mm,<->,>=round cap,join=round]
      \draw[edg] (1,0) -- (2.5,1.5) -- (4,0);
      \draw[edg] (2,0) -- (3,1);
      \draw[edg] (3,0) -- (3.5,0.5);
  \end{tikzpicture} ,\quad
  \begin{tikzpicture}
      \tikzstyle{ver} = [circle, draw, fill, inner sep=0.5mm]
      \tikzstyle{edg} = [line width=0.6mm,<->,>=round cap,join=round]
      \draw[edg] (1,0) -- (2.5,1.5) -- (4,0);
      \draw[edg] (2,0) -- (1.5,0.5);
      \draw[edg] (3,0) -- (3.5,0.5);
  \end{tikzpicture},
\end{align}
and the first $6$ only are prime.

\begin{defi}[cf. \cite{josuatvergesmenousnovellithibon}]
The map $\eta : \mathcal{S}'_n \to \NC_n$ is given by the following rule. Let $T\in\mathcal{T}_n$.
First, we place labels $1,2,\dots,n$ such that $i$ is placed between the $i$th and $(i+1)$st leaves, from left to right.
Then, we have $i \overset{\eta(T)}{\sim} j$ iff we can draw a path from label $i$ to label $j$ that stays above the level of the leaves,
and cross only middle edges of $T$. 
\end{defi}

For example,
\[
  \eta\Big( \;
  \begin{tikzpicture}
      \tikzstyle{ver} = [circle, draw, fill, inner sep=0.5mm]
      \tikzstyle{edg} = [line width=0.6mm,<->,>=round cap,join=round]
      \tikzstyle{pat} = [color=gray,dotted,line width=0.4mm]
      \node      at (1.5,0) {\small 1};
      \node      at (2.5,0) {\small 2};
      \node      at (3.5,0) {\small 3};
      \node      at (4.5,0) {\small 4};
      \node      at (5.5,0) {\small 5};
      \node      at (6.5,0) {\small 6};
      \node      at (7.5,0) {\small 7};
      \pgfresetboundingbox
      \draw[edg] (1,0) -- (4.5,3.5) -- (8,0);
      \draw[edg] (2,0) -- (1.5,0.5);
      \draw[edg] (3,0) -- (5,2) -- (7,0);
      \draw[edg] (5,2) -- (4.5,3.5);
      \draw[edg] (4,0) -- (5,2);
      \draw[edg] (5,0) -- (5.5,0.5) -- (6,0);
      \draw[edg] (5.5,0.5) -- (5,2);
      \draw[pat] (2.6,0.3) .. controls (4.8,3) .. (7.4,0.3);
      \draw[pat] (3.6,0.3) .. controls (4,0.7) .. (4.4,0.3);
      \draw[pat] (4.6,0.3) .. controls (5.5,1.2) .. (6.4,0.3);
  \end{tikzpicture} 
  \;\Big)
  = 1|27|346|5.
\]
Another property that we will need and is elementary to check is that 
\begin{equation}  \label{signbij}
  \# \eta(T) = \# \intern(T).
\end{equation}

\begin{defi}  \label{defiwttree}
The {\it left branch} of a tree $T\in\PST_n$ is the path going from the root down to the leftmost leaf.
Let $\intern'(T)$ denote the set of internal vertices that are not in the left branch of $T$.
The {\it degree} $\deg(v)$ of $v \in \intern(T)$ is its number of descendants.
And the {\it weight} of $T\in\PST_n$ is:
\begin{equation} \label{defwttrees}
   \wt(T) = \prod_{v\in \intern'(T)} \delta_{ \deg(v) -1 }.
\end{equation}
\end{defi}

We have now all necessary definitions to state:
\begin{theo} \label{muschroder}
For any $\pi\in \NC_n$, we have:
\begin{align} \label{muschrodereq}
  \mu(\pi,1_n) = (-1)^{ \#\pi - 1 } \sum_{ T \in \PST_n, \; \eta(T)=\pi } \wt(T).
\end{align}
\end{theo}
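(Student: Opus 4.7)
The plan is to verify the theorem by establishing the matrix identity $\zeta\tilde\mu=I$, where $\tilde\mu(\pi,1_n)$ denotes the right-hand side of \eqref{muschrodereq}; uniqueness of the matrix inverse then forces $\tilde\mu=\mu$. Concretely, I would check that for every $\pi\in\NC_n$,
\[
  \sum_{\sigma\geq\pi}\zeta(\pi,\sigma)\,\tilde\mu(\sigma,1_n) \;=\; \begin{cases}1 & \text{if }\pi=1_n,\\ 0 & \text{if }\pi<1_n.\end{cases}
\]
By \eqref{cmmu} together with the identity \eqref{signbij}, an equivalent and cleaner reformulation is the single closed-form statement
\[
  C_n \;=\; \sum_{T\in\PST_n}(-1)^{\#\intern(T)-1}\,\wt(T)\,M_{\eta(T)},
\]
which is more amenable to a generating-function attack.

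The first step is a combinatorial lemma: for $T\in\PST_n$, each internal vertex $v$ of $T$ corresponds to a block of $\eta(T)$ of cardinality $\deg(v)-1$, consisting precisely of the labels sitting in the $\deg(v)-1$ slots between consecutive children of $v$. Indeed, crossing the middle edges emanating from $v$ connects exactly these slots, while the middle edges of any other vertex $u$ stay confined to $u$'s own sub-region and so cannot leak into the slots of $v$. Consequently $M_{\eta(T)}=\prod_{v\in\intern(T)}M_{\deg(v)-1}$, and combining with \eqref{defwttrees}:
\[
  \wt(T)\,M_{\eta(T)} \;=\; \prod_{v\in L(T)} M_{\deg(v)-1} \;\cdot\; \prod_{v\in\intern'(T)} M_{\deg(v)-1}\,\delta_{\deg(v)-1},
\]
where $L(T)=\intern(T)\setminus\intern'(T)$ is the left branch. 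Since $M_{d-1}=[z^d]M(z)$ and $M_{d-1}\delta_{d-1}=[z^d]A(z)$ for $A=M\odot\Delta$, each vertex of $T$ contributes a coefficient of $M(z)$ or $A(z)$ according to whether it sits on the left branch.

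The second step encodes the signed tree sum as a generating function. Let $S(z)$ be the generating function for arbitrary rooted Schröder (sub)trees in which each internal vertex of degree $d$ is weighted by $[z^d]\bar A(z)$ (with $\bar A=A-z$) and each leaf by $z$; standard symbolic combinatorics yields $S(z)=z+\bar A(S(z))$. A prime tree is then decomposed along its left branch $r=v_0,v_1,\dots,v_\ell$: at each $v_i$ the leftmost child continues the branch (or terminates at a leaf), while the middle and rightmost children are roots of unrestricted sub-trees whose contribution is captured by $S(z)$. Summing over the data of the left branch produces a closed-form expression for
\[
  \widetilde K(z)\;=\;\sum_{n\geq 1}z^n\sum_{T\in\PST_n}(-1)^{\#\intern(T)-1}\,\wt(T)\,M_{\eta(T)}
\]
in terms of $M$ and $S$; one then checks algebraically that $\widetilde K(z)=zC(z)$, i.e.\ that it satisfies the defining functional equation \eqref{relMCz}. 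The hardest step will be this final algebraic verification, where the prime condition on $T$, the special role of the left branch in the weight, and the overall sign $(-1)^{\#\intern(T)-1}$ must simultaneously unwind into \eqref{relMCz}. An alternative route via a sign-reversing involution on the Neumann-series expansion $\mu=\sum_{k\geq 0}(I-\zeta)^k$ is conceivable, but reconciling such an involution with the multiplicative arc weights inside $\zeta(\pi,\sigma)$ appears delicate, so the generating-function strategy seems cleaner.
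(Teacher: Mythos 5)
Your reduction in the first display is where the argument breaks down. The identity you propose to prove instead, $C_n=\sum_{T\in\PST_n}(-1)^{\#\intern(T)-1}\wt(T)\,M_{\eta(T)}$, is \emph{not} equivalent to \eqref{muschrodereq}: it is strictly weaker. Writing $V_\pi$ for the right-hand side of \eqref{muschrodereq} and using \eqref{cmmu}, your reformulation amounts to $\sum_{\pi\in\NC_n}M_\pi\bigl(\mu(\pi,1_n)-V_\pi\bigr)=0$. But the monomials $M_\pi$ are not linearly independent in the formal variables $M_1,M_2,\dots$, since $M_\pi$ depends only on the multiset of block sizes of $\pi$; so this identity only determines, for each block-size type, the \emph{sum} of $\mu(\pi,1_n)-V_\pi$ over the noncrossing partitions of that type (and the extra $\delta$-dependence does not help, because the coefficient extraction is in the $M$-variables). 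Already for $n=3$ one has $M_{12|3}=M_{13|2}=M_{1|23}=M_2M_1$, so your target identity only pins down $\mu(12|3,1_3)+\mu(13|2,1_3)+\mu(1|23,1_3)=-(2+\delta_1)$, whereas Theorem~\ref{muschroder} asserts the individual values, which are genuinely different ($-1$, $-\delta_1$ and $-1$ respectively). Thus even a complete execution of your generating-function plan would prove Theorem~\ref{theotrees}, but not the statement in question, which is an entrywise description of the column of $\zeta^{-1}$ indexed by $1_n$. Moreover, even the weaker claim is not actually established in the proposal: the decisive step (``one then checks algebraically that $\widetilde K(z)=zC(z)$'', i.e.\ compatibility with \eqref{relMCz} after the left-branch decomposition, including the sign bookkeeping) is exactly the hard part and is left unverified.

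The entrywise verification that you set up and then abandon, namely $\sum_{\sigma\geq\pi}\zeta(\pi,\sigma)V_\sigma=1$ if $\pi=1_n$ and $0$ otherwise, is in fact the paper's route, and it is carried out combinatorially rather than by generating functions: one transports $V_\pi$ through the bijection $\phi$ to noncrossing arrangements of binary trees, rewrites the weighted zeta function through Kreweras complementation, and then cancels all terms for $\pi<1_n$ by an explicit weight-preserving, sign-reversing involution (adding or removing the root joining the two trees containing $\min(B_0)$ and $\max(B_0)$, where $B_0$ is a distinguished nontrivial block of the inverse Kreweras complement of $\pi$); the only delicate point is matching the local weight change, which is the identity $\cov(v_0)+1=\iota(\cdot)-1$ in the paper. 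So the sign-reversing-involution route you dismiss as delicate is precisely what works, once the weights are transported to the tree side. On the positive side, your preliminary lemma is correct and useful: the blocks of $\eta(T)$ are indeed in bijection with the internal vertices of $T$, the block of $v$ consisting of the $\deg(v)-1$ labels in the gaps between consecutive children of $v$, so that $M_{\eta(T)}=\prod_{v\in\intern(T)}M_{\deg(v)-1}$; this refines \eqref{signbij} and is implicit in the paper, but it does not by itself repair the loss of information caused by passing to the $C_n$ identity.
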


This will be proved in the next section. Together with Equations~\eqref{cmmu} and \eqref{signbij}, it immediately follows:
\begin{theo} \label{theotrees}
 The $n$th $\Delta$-cumulant is given combinatorially by
\[
  C_n = \sum_{ T \in \PST_n } M_{\eta(T)} (-1)^{ \#\intern(T) - 1 }  \wt(T).
\]
\end{theo}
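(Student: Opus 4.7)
The plan is to verify that the candidate formula defines the unique column of $\zeta^{-1}$ indexed by $1_n$. Since $\zeta$ is unitriangular with respect to the lattice order on $\NC_n$ (the diagonal entries $\zeta(\pi,\pi)$ equal $1$ because $\delta_0=1$), this column is determined by
\[
  \sum_{\sigma:\, \pi \leq \sigma \leq 1_n} \zeta(\pi, \sigma)\, \mu(\sigma, 1_n) \;=\; [\pi = 1_n]
\]
for every $\pi \in \NC_n$. Substituting the candidate formula, swapping the order of summation, and using $\#\eta(T) = \#\intern(T)$ from \eqref{signbij}, the identity reduces to
\[
  \sum_{T \in \PST_n,\, \eta(T) \geq \pi} (-1)^{\#\intern(T) - 1}\, \zeta(\pi, \eta(T))\, \wt(T) \;=\; [\pi = 1_n].
\]
The case $\pi = 1_n$ is immediate: only the corolla contributes, with a single internal vertex (the root, on the left branch), $\zeta(1_n,1_n)=1$, $\wt(T)=1$ and sign $+1$.

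For $\pi < 1_n$, each summand is, by \eqref{defzeta2} and \eqref{defwttrees}, a signed monomial in the $\delta_i$'s whose factors split into two groups: arc-factors coming from $\zeta(\pi, \eta(T))$ (one per arc of $\pi$, with subscript determined by the $\eta(T)$-block containing that arc), and vertex-factors coming from $\wt(T)$ (one per non-left-branch internal vertex $v$ of $T$, with subscript $\deg(v)-1$). I would package both kinds of factors onto a single enriched combinatorial object: a prime Schröder tree $T$ together with, at each internal vertex, the restriction $\pi|_B$ on the corresponding block $B \in \eta(T)$. The identity then becomes a signed enumeration of such enriched pairs.

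The natural way to finish is a sign-reversing involution on these enriched trees whose unique fixed point is the pair (corolla, $1_n$). The involution would swap a locally detectable pivot: a non-left-branch internal vertex $v$ of $T$ all of whose children are leaves, against an arc of some $\pi|_B$ in the parent block that can be expanded back into such a vertex. Contracting $v$ decreases $\#\intern(T)$ by one and transforms its vertex-factor $\delta_{\deg(v)-1}$ into an arc-factor of the enlarged $\zeta(\pi, \eta(T))$, while the reverse operation splits off a corolla subtree. The sign flips and the combined monomial is preserved provided $\deg(v) - 1$ equals the number of interior elements absorbed by the arc, a geometric consequence of how $\eta$ reads off blocks from a prime Schröder tree. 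The main obstacle is to define the pivot canonically — via a deterministic scanning rule on $(T, \pi)$ — and to verify both that the resulting map is a true involution on every non-fixed configuration and that the $\delta$-indices line up exactly; this bookkeeping, matching the definitions of $\eta$, the prime Schröder structure and the non-crossing restrictions $\pi|_B$, is the combinatorial heart of the theorem.
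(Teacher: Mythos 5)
Your reduction is sound and coincides with the paper's starting point: since $\zeta$ is unitriangular (diagonal entries $1$ because $\delta_0=1$), it suffices to check that the candidate column satisfies $\sum_{T\in\PST_n,\ \eta(T)\geq\pi}(-1)^{\#\intern(T)-1}\zeta(\pi,\eta(T))\wt(T)=[\pi=1_n]$, and your treatment of the case $\pi=1_n$ (only the corolla survives, with weight $1$ and sign $+1$) is exactly right. But for $\pi<1_n$ you have only announced, not produced, the sign-reversing involution, and you yourself flag the three missing ingredients: a canonical pivot rule, the verification that the map is an involution, and the matching of $\delta$-indices. That is not peripheral bookkeeping; it is the entire content of the theorem beyond the (easy) linear-algebra frame. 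As described, your pivot is also problematic: the factors of $\zeta(\pi,\eta(T))$ are indexed by the arcs of $\pi$ (their number never changes with $T$; only the subscripts do, via the gap counts inside blocks of $\eta(T)$), so ``transforming a vertex-factor into an arc-factor'' really means a subscript of some arc jumping from $0$ to $\deg(v)-1$ when a vertex is contracted, and the effect of a local tree move on these subscripts is not local in any obvious way. Moreover, a pivot chosen by scanning $(T,\pi)$ must be re-identifiable after the move for the map to be an involution, and nothing in your sketch guarantees this.

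The paper resolves precisely these difficulties by a change of variables you do not have: it transports the sum through the bijection $\phi:\PST_n\to\mathcal{A}_n$ to noncrossing arrangements of binary trees and through Kreweras complementation, replacing $\zeta(\rho,\eta(T))$ by $\zeta^c(\overline{A},\mathbin{^c\rho})$, for which it proves a product formula whose factors are indexed by \emph{blocks} of $\mathbin{^c\rho}$ (Equation~\eqref{defzetac2}), not by arcs of $\rho$. This makes the pivot canonical in a way that is independent of the tree: one fixes the block $B_0$ of $\mathbin{^c\rho}$ of size at least $2$ with smallest minimum, and the involution $\Psi$ simply adds or removes a root joining the trees of the arrangement containing $\min(B_0)$ and $\max(B_0)$. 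Because $B_0$ depends only on $\rho$, involutivity is immediate, and the weight bookkeeping collapses to the single identity $\cov(v_0)+1=\iota\bigl(\overline{\Psi(A)}|_{[B_0]}\bigr)-1$ (Equation~\eqref{eqcoviota}), which is checked directly. So while your high-level strategy is the same as the paper's, the proposal has a genuine gap where the paper's actual work lies: without the passage to arrangements and the Kreweras-complement form of $\zeta$ (or some equivalent device making the pivot depend only on $\rho$ and the weight change local), the involution you need has not been defined, let alone verified.
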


For example, one can check that the $6$ trees in \eqref{schoder4leaves} (in this order) gives the formula for $C_3$
given at the beginning of this section.

In the free case ($\delta_i=1$ for all $i$, hence $\wt(T)=1$ for all $T\in\mathcal{S}'_n$), this was obtained 
in \cite{josuatvergesmenousnovellithibon}. It was proved there that this formula in terms of prime Schröder 
trees implies Speicher's one involving the Möbius function of $\NC_n$ \cite{speicher}.

In the boolean case ($\delta_1=1$, $\delta_i=0$ for $i\geq 2$), we have $\wt(T)=1$ if all internal vertices of 
$T$ are in the left branch, and $0$ otherwise. Such trees with $\wt(T)=1$ are in bijection
with interval partitions, via the map $\eta$ (suitably restricted). For example,
\[
  \eta\Big( \;
     \begin{tikzpicture}
      \tikzstyle{ver} = [circle, draw, fill, inner sep=0.5mm]
      \tikzstyle{edg} = [line width=0.6mm,<->,>=round cap,join=round]
      \node      at (1.5,0) {\small 1};
      \node      at (2.5,0) {\small 2};
      \node      at (3.5,0) {\small 3};
      \node      at (4.5,0) {\small 4};
      \node      at (5.5,0) {\small 5};
      \node      at (6.5,0) {\small 6};
      \node      at (7.5,0) {\small 7};
      \pgfresetboundingbox
      \draw[edg] (1,0) -- (4.5,3.5) -- (8,0);
      \draw[edg] (2,0) -- (1.5,0.5);
      \draw[edg] (3,0) -- (3,2);
      \draw[edg] (4,0) -- (3,2);
      \draw[edg] (5,0) -- (3,2);
      \draw[edg] (6,0) -- (4,3);
      \draw[edg] (7,0) -- (4,3);
     \end{tikzpicture} 
   \; \Big) = 1|234|56|7.  
\]
Moreover, the factor $(-1)^{ \#\intern(T) - 1 }$ is easily seen to be the Möbius function of $\IN_n$ evaluated 
at $(\eta(T),1_n)$, so we recover the known formula for boolean cumulants \cite{speicherworoudi}.

\section{\texorpdfstring{Proof of Equation~\eqref{muschrodereq}}{Proof of Equation (22)}}

Let $V_\pi$ denote the right hand side of \eqref{muschrodereq}, and for $\rho\in\NC_n$, let 
\[
  W_{\rho} = \sum_{\substack{ \pi \in \NC_n \\ \rho \leq \pi \leq 1_n }} \zeta(\rho,\pi) V_\pi.
\]
Our goal is to show that $W_{1_n} = 1$ and $W_{\rho}=0$ if $\rho \neq 1_n$.
Indeed, these equations precisely say that $(V_{\pi})_{\pi\in\NC_n}$ is the column 
vector of $\zeta^{-1}$ indexed by $1_n$, i.e.~$V_{\pi} = \mu(\pi,1_n)$.

First note that $W_{1_n}=1$ is straightforward. The sum defining $W_{1_n}$ is reduced to the unique term $\zeta(1_n,1_n) V_{1_n}$.
Moreover $V_{1_n}=1$ because there is a unique $T\in \mathcal{S}'_n$ such that $\eta(T)=1_n$, 
that having one internal vertex whose $n+1$ descendants are the $n+1$ leaves.
So, from now on we assume $\rho<1_n$ and we want to prove $W_{\rho}=0$.

Let us first rewrite the formula for $V_{\pi}$ in terms of other combinatorial objects, also taken from \cite{josuatvergesmenousnovellithibon}.

\begin{defi}
Let $\mathcal{A}_n$ denote the set of {\it noncrossing arrangements of binary trees} with $n$ leaves, defined as follows.
Given $n$ dots on a horizontal axis, $A\in\mathcal{A}_n$ is a set of binary trees such that: each of the $n$ dots is a leaf 
of exactly one of the trees, and edges do not cross when the trees are drawn in the canonical way
(formally described by the fact that the edges issued from an internal vertex go in the South East and South West directions).
Also, for $A\in\mathcal{A}_n$, we define a noncrossing partition $\overline{A}\in\NC_n$ as follows: label the leaves by $1,2,\dots,n$
from left to right, then each block of $\overline{A}$ is the set of labels of the leaves in some tree of $A$.
\end{defi}

For example, an element $A \in \mathcal{A}_{11}$ is in the right part of Figure~\ref{arrtrees}, 
and the associated noncrossing partition is $\overline{A} = 1456|23|78{\rm AB}|9$. 
Note that the map $A\mapsto \overline{A}$ is surjective but not injective.

\begin{figure}[h!tp] \centering \captionsetup{width=.7\linewidth}
 \tikzset{every picture/.style={scale=0.4}}
 \begin{tikzpicture}
   \tikzstyle{ver} = [circle, draw, fill, inner sep=0.5mm]
   \tikzstyle{edg} = [line width=0.6mm,<->,>=round cap,join=round]
   \draw[edg] (1,0) to (6.5,5.5) to (12,0);
   \draw[edg] (6,0) to (3.5,2.5);   
   \draw[edg] (3,2) to (5,0);   
   \draw[edg] (4.5,0.5) to (4,0);   
   \draw[edg] (2,0) to (2.5,0.5) to (3,0);   
   \draw[edg] (7,0) to (9,2) to (11,0);      
   \draw[edg] (9.5,1.5) to (8,0);
   \draw[edg] (2.5,0.5) to (3,2);   
   \draw[edg] (9,0) to (9.5,1.5);   
   \draw[edg] (10,0) to (10.5,0.5);   
   \draw[edg] (6.5,5.5) to (9,2);   
 \end{tikzpicture}
 \hspace{2.5cm}
 \begin{tikzpicture}
   \tikzstyle{ver} = [circle, draw, fill, inner sep=0.18mm]
   \tikzstyle{edg} = [line width=0.6mm,<->,>=round cap,join=round]
   \node[ver] at (9,0.06) {};
   \node      at (1,-1) {1};
   \node      at (2,-1) {2};
   \node      at (3,-1) {3};
   \node      at (4,-1) {4};
   \node      at (5,-1) {5};
   \node      at (6,-1) {6};
   \node      at (7,-1) {7};
   \node      at (8,-1) {8};
   \node      at (9,-1) {9};
   \node      at (10,-1) {A};
   \node      at (11,-1) {B};
   \draw[edg] (1,0) to (3.5,2.5) to (6,0);
   \draw[edg] (3,2) to (5,0);   
   \draw[edg] (4.5,0.5) to (4,0);   
   \draw[edg] (2,0) to (2.5,0.5) to (3,0);
   \draw[edg] (7,0) to (9,2) to (11,0);
   \draw[edg] (10,0) to (10.5,0.5);
   \draw[edg] (9.5,1.5) to (8,0);
 \end{tikzpicture}
 \caption{The bijection $\phi$. \label{arrtrees}}
\end{figure}
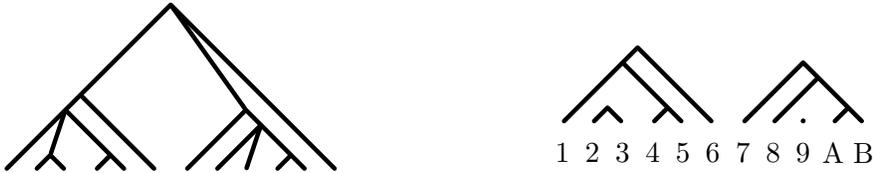

\begin{prop}[cf. \cite{josuatvergesmenousnovellithibon}]
 There is a bijection $\phi:\PST_n \to \mathcal{A}_n$ such that for $T\in\PST_n$, $\phi(T)$ is obtained 
 from $T$ by removing the root and its incident edges, and removing every middle edge of the tree.
\end{prop}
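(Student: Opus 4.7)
The plan is to check that $\phi$ lands in $\mathcal{A}_n$ and to exhibit an explicit inverse. First I would verify that $\phi(T)\in\mathcal{A}_n$ for every $T\in\PST_n$. Removing the root $R$ of $T$ and its incident edges splits $T$ into subtrees rooted at the children of $R$; by primality the rightmost such child is a leaf, so dropping it leaves $n$ leaves labelled $1,\dots,n$. Erasing all middle edges then further splits the remaining subtrees at every internal vertex (which now retains only its left and right children), so each connected component is a binary tree. The original planar drawing of $T$ is crossing-free, and we have only removed edges, so the resulting arrangement of binary trees on the baseline of leaves is also noncrossing in the sense required by $\mathcal{A}_n$.

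Next I would construct $\phi^{-1}$ explicitly. Given $A\in\mathcal{A}_n$, the noncrossing condition forces the trees of $A$ into a nested forest: any two trees are either span-disjoint, or one sits inside a single gap (between two consecutive leaves) of the other. Order the outermost trees of $A$ as $S_1,\dots,S_k$ from left to right, with $S_1$ containing leaf~$1$. Introduce a new root $R$ together with a new leaf at position $n+1$; attach the root of $S_1$ as the left child of $R$, the new leaf as the right child, and the roots of $S_2,\dots,S_k$ as middle children of $R$ in order. Then recursively, for each nontrivial binary tree $S\in A$ with leaves $i_1<\dots<i_m$, associate each of its $m-1$ internal vertices with the gap $(i_j,i_{j+1})$ it merges, and attach as middle children of that vertex the roots of all trees of $A$ nested in this gap, in left-to-right order. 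The resulting $T$ has every internal vertex of arity at least $2$ and a leaf as the rightmost child of the root, so $T\in\PST_n$.

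Finally I would check that $\phi\circ\phi^{-1}=\mathrm{id}$ and $\phi^{-1}\circ\phi=\mathrm{id}$. The first is transparent: $\phi^{-1}$ adds exactly the data (root, rightmost leaf, middle edges) that $\phi$ then removes. For the second, the binary chunks surviving in $T$ after erasure of middle edges are exactly the trees of $\phi(T)$, and the set of middle children of any internal vertex of $T$ corresponds to the trees of $\phi(T)$ nested in the gap at that vertex. The main obstacle, on which I would spend the most care, is this last correspondence: I would need to show that the gap determined by an internal vertex of a binary chunk is precisely the interval between the rightmost leaf of its left subtree and the leftmost leaf of its right subtree, and that the planarity of $T$ forces every subtree attached to that vertex by a middle edge to have its leaves lying inside exactly this gap. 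Once this planar/gap compatibility is pinned down, the rest of the verification is bookkeeping.
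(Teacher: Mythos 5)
The paper does not actually prove this proposition: it is quoted (``cf.'') from \cite{josuatvergesmenousnovellithibon}, so there is no internal proof to compare against. Your argument — check that erasing the root, the rightmost leaf and all middle edges leaves a noncrossing arrangement of binary trees, then build the inverse by re-attaching each tree of $A$ as a middle child at the vertex governing the gap in which it is nested — is the natural proof and is essentially correct, including the key lemma you isolate: the gap attached to an internal vertex $v$ of a binary chunk is bounded by the rightmost leaf of its left subtree and the leftmost leaf of its right subtree, and these coincide with the extreme leaves of the full left and right subtrees of $v$ in $T$ because extreme leaves are reached by following only left or right edges, which survive the erasure. One detail needs correcting in your description of $\phi^{-1}$: at a vertex of $S$ you should attach only the \emph{outermost} (maximal) trees of $A$ nested in the corresponding gap, not ``all trees of $A$ nested in this gap''; a tree nested at depth two lies in a gap of $S$ as well as in a gap of an intermediate tree, so the literal reading would attach it twice and the construction would not be well defined. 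With that fix (which matches what you already do at the root, where you take the outermost trees $S_1,\dots,S_k$), and with the planarity/gap compatibility carried out as you outline, the two compositions are indeed the identity and the proposition follows.
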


See Figure~\ref{arrtrees} for an example, and note that there is an obvious identification of
the vertices of $\phi(T)$ with vertices of $T$ different from the root and the rightmost leaf.
For $A\in\mathcal{A}_n$ we will denote $\intern'(A) = \intern'(T)$ where 
$T$ is the element of $\PST_n$ such that $\phi(T)=A$.

\begin{defi}
We extend the weight function to $\mathcal{A}_n$ by the rule that $\wt(\phi(T)) = \wt(T)$ for any $T\in\PST_n$.
For two internal vertices $v_1,v_2\in\intern(A)$, we say that $v_1$ {\it covers} $v_2$ if, 
in the unique $T\in\PST_n$ such that $\phi(T)=A$, $v_2$ is a descendant of $v_1$ via a middle edge.
For $v\in\intern'(A)$, we denote by $\cov(v)$ the number of vertices covered by $v$.
\end{defi}

\begin{lemm} \label{wtarrtrees}
 For $A\in\mathcal{A}_n$, we have:
\begin{equation} \label{wttrees}
  \wt(A) = \prod_{v\in\intern'(A)} \delta_{ \cov(v)+1 }.
\end{equation}
\end{lemm}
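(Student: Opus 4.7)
The plan is to recognise that Lemma~\ref{wtarrtrees} is essentially a direct unfolding of Definition~\ref{defiwttree} together with the extension of the weight to $\mathcal{A}_n$. First I fix $A\in\mathcal{A}_n$ and let $T\in\PST_n$ be the unique prime Schröder tree with $\phi(T)=A$. By the rule $\wt(\phi(T))=\wt(T)$ and the identification $\intern'(A)=\intern'(T)$, Definition~\ref{defiwttree} already gives
\[
\wt(A)=\prod_{v\in\intern'(A)}\delta_{\deg(v)-1},
\]
so the whole task reduces to proving the exponent identity $\deg(v)-1=\cov(v)+1$ for each $v\in\intern'(T)$, where $\deg(v)$ is computed in $T$ and $\cov(v)$ in $A$.

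The main step is then the structural identity $\deg(v)=\cov(v)+2$. I would argue it as follows: by the Schröder condition, every internal vertex $v$ of $T$ has exactly one left edge and one right edge issued from it, and the remaining $\deg(v)-2$ edges are middle edges. The map $\phi$ is described as removing the root, its incident edges, and every middle edge of $T$, while keeping all left and right edges. Consequently, after applying $\phi$, the vertex $v$ retains its left and right children, while its former middle children become the roots of the binary trees ``covered'' by $v$ in $A$; by the definition of $\cov$ this count equals $\cov(v)$, so $\cov(v)=\deg(v)-2$.

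Plugging this back gives $\delta_{\deg(v)-1}=\delta_{\cov(v)+1}$ factor by factor in the product, which is exactly \eqref{wttrees}. The only points requiring a moment of care are verifying that vertices of $\intern'(T)$ really are internal vertices having both a left and a right edge attached, which is immediate from the Schröder definition, and checking that the description of $\phi$ truly leaves left and right edges intact so that the covering relation in $A$ matches middle edges in $T$ bijectively. Beyond this bookkeeping there is no combinatorial obstacle: the lemma is a direct consequence of how $\phi$ and the weight $\wt$ were set up, and no new input is required.
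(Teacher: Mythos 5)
Your proof is correct and follows essentially the same route as the paper: both arguments transport the weight through the bijection $\phi$ and reduce the lemma to the observation that for $v\in\intern'(T)$ the covered vertices are exactly the middle-edge children, so $\cov(v)=\deg(v)-2$ and hence $\delta_{\deg(v)-1}=\delta_{\cov(v)+1}$ factor by factor. Nothing further is needed.
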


\begin{proof}
 This is a simple reformulation of Definition~\ref{defiwttree} using the bijection $\phi$.
 Note that for $v\in\intern'(T)$, the number of vertices it covers is $\deg(v)-2$, 
 since these are all its descendants except the left and right ones. This explains why
 the index $\deg(v)-1$ in \eqref{defwttrees} becomes $\cov(v)+1$ here.
\end{proof}

To state the next lemma, we need the classical notion of {\it Kreweras complement} \cite{kreweras}.
Let $\pi\in\NC_n$. Suppose we have $2n$ labels $1$, $1'$, $2$, $2'$, $\dots$, $n$, $n'$ on a horizontal
line, in this order, and that $\pi$ is drawn as in \eqref{ncexample} (only using the labels $1,\dots,n$).
Then the Kreweras complement $\pi^c$ of $\pi$ is defined by the condition that $i \overset{\pi^c}{\sim}j$
iff we can connect $i'$ to $j'$ by a path that stays above the level of the labels, and do not cross the
arches of $\pi$. For example, Figure~\ref{exkrew} shows that $(134|2|59|678|A)^c = 12|3|49A|58|6|7 $.
The map $\pi\mapsto\pi^c$ is a poset anti-isomorphism from $\NC_n$ to itself, and its inverse is 
denoted $\pi \mapsto \mathbin{^c\pi}$. We refer to \cite{kreweras} for details.

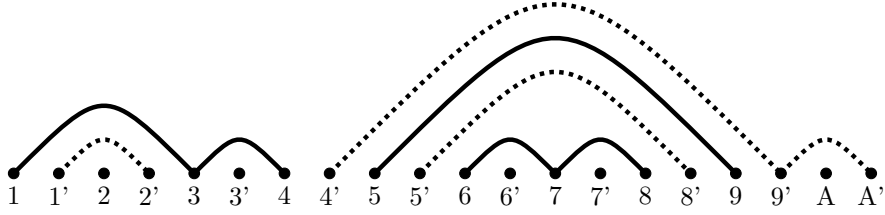
\begin{figure} \centering
 \begin{tikzpicture}
   \tikzstyle{ver} = [circle, draw, fill, inner sep=0.5mm]
   \tikzstyle{edg} = [line width=0.6mm]
   \tikzstyle{edg2} = [line width=0.6mm,dotted]
   \node      at (1,-0.5) {\small 1};
   \node      at (2,-0.5) {\small 1'};
   \node      at (3,-0.5) {\small 2};
   \node      at (4,-0.5) {\small 2'};
   \node      at (5,-0.5) {\small 3};
   \node      at (6,-0.5) {\small 3'};
   \node      at (7,-0.5) {\small 4};
   \node      at (8,-0.5) {\small 4'};
   \node      at (9,-0.5) {\small 5};
   \node      at (10,-0.5) {\small 5'};
   \node      at (11,-0.5) {\small 6};
   \node      at (12,-0.5) {\small 6'};
   \node      at (13,-0.5) {\small 7};
   \node      at (14,-0.5) {\small 7'};
   \node      at (15,-0.5) {\small 8};
   \node      at (16,-0.5) {\small 8'};
   \node      at (17,-0.5) {\small 9};
   \node      at (18,-0.5) {\small 9'};
   \node      at (19,-0.5) {\small A};
   \node      at (20,-0.5) {\small A'};
   \node[ver] at (1,0) {};
   \node[ver] at (2,0) {};
   \node[ver] at (3,0) {};
   \node[ver] at (4,0) {};
   \node[ver] at (5,0) {};
   \node[ver] at (6,0) {};
   \node[ver] at (7,0) {};
   \node[ver] at (8,0) {};
   \node[ver] at (9,0) {};
   \node[ver] at (10,0) {};
   \node[ver] at (11,0) {};
   \node[ver] at (12,0) {};
   \node[ver] at (13,0) {};
   \node[ver] at (14,0) {};
   \node[ver] at (15,0) {};
   \node[ver] at (16,0) {};
   \node[ver] at (17,0) {};
   \node[ver] at (18,0) {};
   \node[ver] at (19,0) {};
   \node[ver] at (20,0) {};
   \draw[edg] (1,0) .. controls (3,2) .. (5,0);
   \draw[edg] (5,0) .. controls (6,1) .. (7,0);
   \draw[edg] (9,0) .. controls (13,4) .. (17,0);  
   \draw[edg] (11,0) .. controls (12,1) .. (13,0);  
   \draw[edg] (13,0) .. controls (14,1) .. (15,0);  
   \draw[edg2] (2,0) .. controls (3,1) .. (4,0);  
   \draw[edg2] (8,0) .. controls (13,5) .. (18,0);  
   \draw[edg2] (10,0) .. controls (13,3) .. (16,0);  
   \draw[edg2] (18,0) .. controls (19,1) .. (20,0);  
 \end{tikzpicture}
 \caption{Kreweras complementation.\label{exkrew}} 
\end{figure}

\begin{lemm}[cf. \cite{josuatvergesmenousnovellithibon}]
 If $T\in\mathcal{S}'_n$, we have $\overline{\phi(T)}^c = \eta(T)$.
\end{lemm}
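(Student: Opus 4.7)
The plan is to prove the identity $\overline{\phi(T)}^c=\eta(T)$ by a geometric argument in the upper half-plane, interpreting both partitions as equivalence relations induced by connectivity against obstruction curves. First I would draw $T$ in the upper half-plane $H$ with its $n+1$ leaves $\ell_1<\cdots<\ell_{n+1}$ on the horizontal axis; the $\eta$-label $i$ sits between $\ell_i$ and $\ell_{i+1}$. Since $\phi(T)$ has exactly the $n$ leaves $\ell_1,\dots,\ell_n$ (the rightmost leaf and the root-incident edges are discarded), I may place the Kreweras points $i'$ of $\overline{\phi(T)}^c$ at the same positions as the $\eta$-labels. Unwinding definitions, $i\overset{\eta(T)}{\sim}j$ iff $i$ and $j$ lie in the same connected component of $H$ minus the non-middle edges of $T$, and $i'\overset{\overline{\phi(T)}^c}{\sim}j'$ iff $i'$ and $j'$ lie in the same component of $H$ minus the arcs of $\overline{\phi(T)}$ drawn as noncrossing arches (the standard geometric description of Kreweras complementation). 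The task reduces to showing these two obstruction families induce the same partition of the $n$ label positions.

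The central step is a correspondence between arcs and non-middle-edge contours, binary tree by binary tree. For each binary tree $B$ of $\phi(T)$ with leaves $\ell_{p_1}<\cdots<\ell_{p_k}$ and each consecutive pair $(\ell_{p_j},\ell_{p_{j+1}})$, the edges of $B$ contain the planar path running from $\ell_{p_j}$ up along the right boundary of the $B$-subtree rooted at $\ell_{p_j}$, through the least common ancestor in $B$ of $\ell_{p_j}$ and $\ell_{p_{j+1}}$, and down the left boundary of the $B$-subtree rooted at $\ell_{p_{j+1}}$. This path together with the axis segment $[\ell_{p_j},\ell_{p_{j+1}}]$ bounds a pocket enclosing exactly the labels $p_j, p_j+1, \dots, p_{j+1}-1$, which is also the set enclosed by the arc $(p_j,p_{j+1})$ of $\overline{\phi(T)}$. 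The union of these contours over consecutive pairs of $B$-leaves is the full edge set of $B$, and taking the union over all binary trees gives every non-middle edge of $T$ except the two incident to the root. Those two form a single simple curve from the top of the leftmost binary tree of $\phi(T)$ (or from $\ell_1$) through the root down to $\ell_{n+1}$; since only one endpoint lies on the axis, removing this curve from $H$ does not disconnect $H$, so these edges do not affect the partition of labels.

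Combining this arc-contour correspondence with the coherent nesting of arcs of a noncrossing partition, which matches the spatial nesting of the binary trees of $\phi(T)$ inside $T$, I would conclude that the two obstruction families cut the $n$ label positions into the same blocks, yielding $\overline{\phi(T)}^c=\eta(T)$.

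The main obstacle is making rigorous the global ``enclosed labels'' assertion: any binary tree $B'$ of $\phi(T)$ with a leaf strictly between $\ell_{p_j}$ and $\ell_{p_{j+1}}$ must sit entirely inside the pocket bounded by the $B$-contour and the axis. Equivalently, one has to check that the components of $T$ minus the root and middle edges nest in the planar drawing precisely according to the arc nesting of $\overline{\phi(T)}$. This structural planarity bookkeeping is essentially the content of the construction of $\phi$ in \cite{josuatvergesmenousnovellithibon}, which is why the lemma is credited to that reference.
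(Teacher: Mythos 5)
The paper itself offers no proof of this lemma: it is imported from \cite{josuatvergesmenousnovellithibon} (hence the ``cf.''), so there is no internal argument to compare against, and your half-plane argument is a legitimate independent route. It correctly uses the paper's own geometric descriptions of both sides: $\eta(T)$ as connectivity of the labels in the region above leaf level with the left/right edges of $T$ as obstructions, and Kreweras complementation as connectivity of the primed points against the arches of $\overline{\phi(T)}$, with the primed points placed exactly at the label positions. The arc--pocket correspondence is sound: consecutive leaves of a tree $B$ of $\phi(T)$ are joined inside $B$ by a path, the union of these paths over consecutive pairs is the whole edge set of $B$, and each pocket encloses precisely the labels enclosed by the corresponding arc. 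The containment claim you flag as the main obstacle in fact has a one-line proof you could include rather than defer: a tree of the arrangement with a leaf strictly inside a pocket cannot leave it, because the pocket's boundary consists of edges of the arrangement (which it may not cross, by noncrossingness) and of the axis.

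The one step whose justification does not hold up as written is the disposal of the two root edges. First, when the left child of the root is the leaf $\ell_1$ (e.g.\ the tree with a single internal vertex), both endpoints of your curve lie on the axis, so the stated reason is literally false there. Second, and more importantly, ``removing this curve alone does not disconnect $H$'' is not the relevant statement: what must be shown is that adding the curve to the \emph{other} obstructions does not refine the partition of the labels, and in general an arc with one interior endpoint can very well create new separations when that endpoint is attached to another obstruction reaching the axis --- which is exactly the situation here, since the interior endpoint is the apex of the leftmost tree of $\phi(T)$. The conclusion is nevertheless true, and the repair is short: no edge of $\phi(T)$ meets the two root edges except at their endpoints (the subtrees hanging from the middle children of the root lie strictly below them), so there is a free corridor just beneath the root edges, and any path between two labels that crosses them can be rerouted through this corridor; equivalently, the regions newly cut off by the root edges contain no labels, all labels lying at leaf level between $\ell_1$ and $\ell_{n+1}$. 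With that repair, and the containment lemma made explicit, your argument is a complete proof of $\overline{\phi(T)}^c=\eta(T)$.
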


Using the bijection $\phi$ and the previous lemma, we can write $V_{\pi}$ in terms of 
noncrossing arrangements of binary trees:
\[
  V_{\pi} = (-1)^{\#\pi-1} \sum_{\substack{ A \in \mathcal{A}_n \\  \overline{A}^c = \pi }} \wt(A).
\]
A property of Kreweras complementation is that $\#\pi + \# \pi^c = n+1$.
Note also that we have clearly $\#\overline{A} = \#A$ for $A\in\mathcal{A}_n$.
So $(-1)^{\#\pi-1} = (-1)^{n-\#A}$ if $\overline{A}^c = \pi$.
Plugging the previous formula for $V_{\pi}$ in the definition of $W_{\rho}$, it follows:
\begin{align*}
  (-1)^n W_{\rho} = \sum_{\substack{ \pi\in\NC_n \\ \rho \leq \pi \leq 1_n }} \zeta( \rho, \pi ) \sum_{\substack{ A \in \mathcal{A}_n \\  \overline{A}^c = \pi }} (-1)^{\# A} \wt(A)
           = \sum_{\substack{ A \in \mathcal{A}_n \\  \rho\leq \overline{A}^c }} \zeta( \rho, \overline{A}^c ) (-1)^{\# A} \wt(A).
\end{align*}
Kreweras complementation being a poset anti-automorphism, we can change the condition in the summation to get:
\begin{align*}
  (-1)^n W_{\rho} = \sum_{\substack{ A \in \mathcal{A}_n \\  \overline{A} \leq \mathbin{^c\rho} }} \zeta( \rho, \overline{A}^c ) (-1)^{\# A} \wt(A).
\end{align*}
Then, let us define a map $\zeta^c$ by $\zeta^c ( \alpha, \beta ) = \zeta(\beta^c,\alpha^c)$.
Here we exchange the arguments to keep the fact that $\zeta^c ( \alpha, \beta ) = 0$ if $\alpha\nleq\beta$, just as $\zeta$.
We get the following equality:
\begin{align} \label{wrho}
  (-1)^n W_{\rho} = \sum_{\substack{ A \in \mathcal{A}_n \\  \overline{A} \leq \mathbin{^c\rho} }} \zeta^c( \overline{A} , \mathbin{^c\rho} ) (-1)^{\# A} \wt(A).
\end{align}


We will show that this quantity is $0$ by pairing terms, but we need another lemma before doing that.

If $B\subset\mathbb{N}$ is finite, we denote $[B]$ the smallest interval containing $B$, 
i.e.~the set of consecutive integers $\{\min(B) , \min(B)+1 , \dots, \max(B)\}$. Note that if $\pi\in\NC_n$ and $B\in\pi$, 
$[B]$ is the union of some blocks of $\pi$.

If $\pi\in\NC_n$, there is an interval partition which is minimal among interval partitions
above $\pi$, and its number of blocks is denoted $\iota(\pi)$. It is easily seen that this number can be computed as 
follows: consider $B_1\in\pi$ with $\min(B_1)=1$, then $B_2\in\pi$ with $\min(B_2) = \max(B_1)+1$, and so on until
we find $B_k$ with $\min(B_k) = \max(B_{k-1})+1$, and $\max(B_k)= n$, this last condition meaning that 
$B_{k+1}$ cannot be defined and the process stops. Then $k = \iota(\pi)$. More precisely the smallest interval partition
above $\pi$ is $\{[B_1],\dots,[B_k]\}$.

We also extend this map $\iota$
to $\NC_B$ if $B\subset\mathbb{N}$ by the requirement $\iota(\pi)=\iota(\std(\pi))$.

\begin{lemm}
If $\alpha\leq \beta$, we have:
\begin{equation} \label{defzetac2}
  \zeta^c( \alpha, \beta ) = \prod_{\substack{ B \in \beta, \\  1 \notin B \text{ and } \min(B) \overset{\alpha}{\nsim} \max(B) }}  
     \delta_{ \iota(\alpha|_{[B]}) -1 }.
\end{equation}
\end{lemm}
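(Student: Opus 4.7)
The plan is to apply Equation~\eqref{defzeta2} to the refinement $\beta^c\le\alpha^c$ and to write
\[
  \zeta^c(\alpha,\beta) \;=\; \zeta(\beta^c,\alpha^c)
  \;=\; \prod_{(i,j)\in\arc(\beta^c)} \delta_{N(i,j)},
  \qquad
  N(i,j) \;=\; \#\{ k : i<k<j,\; i \overset{\alpha^c}{\sim} k \overset{\alpha^c}{\sim} j\}.
\]
It then suffices to reindex the product over $\arc(\beta^c)$ as the product over blocks of $\beta$ appearing on the right-hand side of~\eqref{defzetac2} and to match the exponents.

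For the reindexing, I would prove that $B\mapsto(\min(B)-1,\max(B))$ is a bijection from $\{B\in\beta\,:\,1\notin B\}$ onto $\arc(\beta^c)$. Using the interleaved picture on labels $1,1',2,2',\dots,n,n'$, the key claim is that $(\min(B)-1)'$ and $(\max(B))'$ lie in the same pocket of $\beta$ -- the pocket immediately enclosing $B$ -- whereas every intermediate $(k)'$, with $\min(B)\le k\le\max(B)-1$, is enclosed by some arc of $B$. The first point is a short noncrossing argument: any $\beta$-arc $(a,b)$ with $a<\min(B)$ and $\min(B)\le b\le\max(B)$ would force a $\beta$-block crossing $B$. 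The second is immediate since the consecutive-element arcs of $B$ cover the interval $[\min(B),\max(B)-1]$. A counting check, $\#\arc(\beta^c)=n-\#\beta^c=\#\beta-1$, confirms that every arc of $\beta^c$ is accounted for.

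For the exponents, fix $B$ with $1\notin B$ and set $(i,j)=(\min(B)-1,\max(B))$; let $C_1',\dots,C_m'$ denote the top-level blocks of $\alpha|_{[B]}$, so $m=\iota(\alpha|_{[B]})$. Because $\alpha\le\beta$, every arc of $\alpha$ joins elements of a common $\beta$-block, and the noncrossing argument of the previous paragraph applies verbatim with $\alpha$ in place of $\beta$: any arc of $\alpha$ with one endpoint in $[B]$ has both endpoints in $[B]$, hence belongs to $\alpha|_{[B]}$. It follows that $(k)'\overset{\alpha^c}{\sim}(i)'$ iff $(k)'$ is enclosed by no arc of $\alpha|_{[B]}$, and inspection of the top-level decomposition shows this holds iff $k=\max(C_l')$ for some $l$. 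The constraint $k\le\max(B)-1$ excludes $l=m$, whence $N(i,j)=m-1=\iota(\alpha|_{[B]})-1$. Finally, when $\min(B)\overset{\alpha}{\sim}\max(B)$ one has $m=1$ and the corresponding factor equals $\delta_0=1$, which justifies restricting the product to blocks with $\min(B)\overset{\alpha}{\nsim}\max(B)$ and yields precisely~\eqref{defzetac2}.

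The main obstacle will be the noncrossing case analysis behind the reindexing: a clean verification that $(\min(B)-1)'$ and $(\max(B))'$ sit in a common pocket of $\beta$ and that no intermediate primed label shares that pocket. Once this is settled, the analogous argument for $\alpha$ in the exponent computation is essentially a mechanical adaptation.
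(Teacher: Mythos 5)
Your proposal is correct and follows essentially the paper's own route: you expand $\zeta^c(\alpha,\beta)=\zeta(\beta^c,\alpha^c)$ via \eqref{defzeta2}, match the arcs of $\beta^c$ with the blocks $B\in\beta$ not containing $1$ through $B\mapsto(\min(B)-1,\max(B))$, identify each exponent with $\iota(\alpha|_{[B]})-1$, and let $\delta_0=1$ absorb the blocks with $\min(B)\overset{\alpha}{\sim}\max(B)$, exactly as in the paper. The only differences are presentational: you compute the exponent directly from the path/pocket definition of the Kreweras complement (plus a counting argument for surjectivity of the arc--block correspondence), whereas the paper applies the same arc--block fact to $\alpha^c$ and chains blocks $B_1,\dots,B_{k+1}$ of $\alpha$ realizing $\iota(\alpha|_{[B]})$ -- the same underlying correspondence, with your write-up if anything somewhat more detailed.
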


\begin{proof}
We will use the following fact, which is straightforward from the definition of Kreweras complementation: 
asumming $1\leq i < j \leq n$, 
$(i,j)$ is an arch of $\pi^c$ if and only if there is a block $B\in \pi$ such that $\min(B) = i+1 $ and $\max(B)=j$.

Our goal is as follows: to each factor $\delta_k$ in $\zeta(\beta^c,\alpha^c)$, 
associate a factor $\delta_k$ in the right hand side of \eqref{defzetac2}, and reciprocally.

Such a factor $\delta_k$ in $\zeta(\beta^c,\alpha^c)$ means we can find $j_1,\dots,j_{k+2}$ such 
that $1\leq j_1<j_2 < \dots < j_{k+2}\leq n$,
$(j_1,j_{k+2}) \in \arc( \beta^c )$, and $(j_1,j_2), \dots , (j_{k+1},j_{k+2}) \in \arc( \alpha^c )$.
This follows from Equation~\eqref{defzeta2}.

From $(j_1,j_{k+2}) \in \arc( \beta^c )$, we get that $\beta$ contains a block $B$ with $\min(B) = j_1+1$, and 
$\max(B) = j_{k+2}$. Similarly, there exist $B_1,\dots, B_{k+1} \in \alpha$ such that $\min(B_i) = j_i+1$ and 
$\max(B_i) = j_{i+1}$ (for $1\leq i \leq k+1$).

This block $B$ shows that there is a factor $\delta_k$ in the right hand side of \eqref{defzetac2}.
Indeed, we have $1\notin B$ since $\min(B) = j_1+1\geq 2$. We have $\min(B)\in B_1$ and
$\max(B) \in B_{k+1}$ so $\min(B) \overset{\alpha}{\nsim}\max(B)$.
The sets $B_1,\dots,B_{k+1}$ are blocks of $\alpha|_{[B]}$, and the relations between their maxima and minima 
show that $\iota(\alpha|_{[B]})=k+1$. So we get a factor $\delta_k$ in the right hand side of \eqref{defzetac2}, 
as needed.

In the other direction, we can check that starting from $B$ and the blocks $B_1,\dots,B_{k+1}$, 
we find $j_1,\dots,j_{k+2}$ as above.
\end{proof}

The next step is to define a fixed point free involution $\Psi$ on the set $\{ A\in\mathcal{A}_n \, : \, \overline{A} \leq \mathbin{^c\rho} \}$,
such that
\[
   \zeta^c( \overline{A} , \mathbin{^c\rho} ) (-1)^{\# A} \wt(A) = - \zeta^c( \overline{\Psi(A)} , \mathbin{^c\rho} ) (-1)^{\# \Psi(A)} \wt(\Psi(A)).
\]
It will show that the right hand side of Equation~\eqref{wrho} vanishes, since terms indexed 
by $A$ and $\Psi(A)$ cancel each other out, hence it will complete the proof of Equation~\eqref{muschrodereq}.

To begin, we denote $B_0$ the (unique) block of $\mathbin{^c\rho}$ such that $\# B_0 \geq 2$, and $\min(B_0) < \min(B) $ if $B$ is another 
block of $\mathbin{^c\rho}$ such that $\# B \geq 2$. Since $\rho \neq 1_n$, we have $\mathbin{^c\rho} \neq 0_n $, so $B_0$ exists.
To define $\Psi(A)$, we distinguish two cases, whether $\min(B_0) \overset{ \overline{A} }{\sim}  \max(B_0) $ or not.

\begin{itemize}
 \item If $\min(B_0) \overset{ \overline{A} }{\sim}  \max(B_0) $, there is a tree $T$ in the arrangement $A$, two of whose leaves are labelled 
       by $\min(B_0)$ and $\max(B_0)$. Let $v_0$ denote the root of $T$.
       Then, $\Psi(A)$ is defined by removing $v_0$ (as well as the two edges issued from it).
 \item In the other case, $\min(B_0) \overset{ \overline{A} }{\nsim}  \max(B_0) $, it is the reverse operation.
       Let $T_1$ and $T_2$ be the trees in $A$ that respectively contain $\min(B_0)$ and $\max(B_0)$. Then $\Psi(A)$ is obtained from $A$
       by adding a new internal vertex $v$, whose two descendants are the roots of $T_1$ and $T_2$.        
\end{itemize}

\tikzset{every picture/.style={scale=0.4}  }
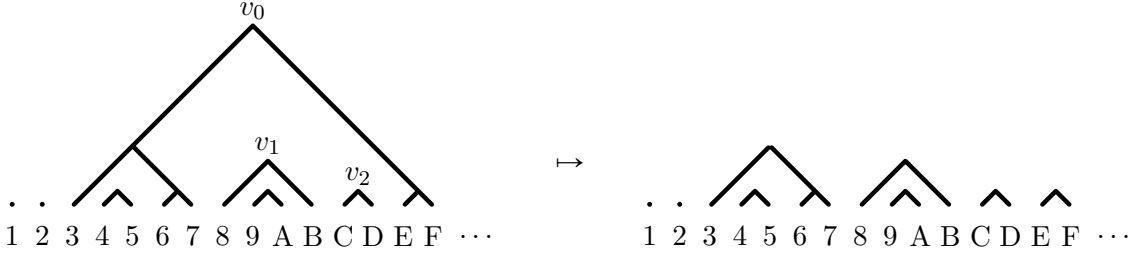
\begin{figure} \centering
 \begin{tikzpicture} 
   \tikzstyle{ver} = [circle, draw, fill, inner sep=0.18mm]
   \tikzstyle{edg} = [line width=0.6mm,<->,>=round cap,join=round]
   \node[ver] at (1,0.06) {};
   \node[ver] at (2,0.06) {};
   \node      at (1,-1) {1};
   \node      at (2,-1) {2};
   \node      at (3,-1) {3};
   \node      at (4,-1) {4};
   \node      at (5,-1) {5};
   \node      at (6,-1) {6};
   \node      at (7,-1) {7};
   \node      at (8,-1) {8};
   \node      at (9,-1) {9};
   \node      at (10,-1) {A};
   \node      at (11,-1) {B};
   \node      at (12,-1) {C};
   \node      at (13,-1) {D};
   \node      at (14,-1) {E};
   \node      at (15,-1) {F};
   \node      at (16.5,-1) {$\dots$};
   \node      at (9,6.5) {$v_0$};
   \node      at (9.5,2) {$v_1$};
   \node      at (12.5,1) {$v_2$};
   \draw[edg] (3,0) to (9,6) to (15,0);
   \draw[edg] (4,0) to (4.5,0.5) to (5,0);
   \draw[edg] (6,0) to (6.5,0.5);
   \draw[edg] (7,0) to (5,2);
   \draw[edg] (8,0) to (9.5,1.5) to (11,0);
   \draw[edg] (9,0) to (9.5,0.5) to (10,0);
   \draw[edg] (12,0) to (12.5,0.5) to (13,0);
   \draw[edg] (14,0) to (14.5,0.5);
 \end{tikzpicture}
 \quad
 \begin{tikzpicture}  
 \draw[color=white] (0,0) rectangle (1,6);
 \node at (0.5,3) {$\mapsto$};
 \end{tikzpicture}
 \quad
 \begin{tikzpicture} 
   \tikzstyle{ver} = [circle, draw, fill, inner sep=0.18mm]
   \tikzstyle{edg} = [line width=0.6mm,<->,>=round cap,join=round]
   \node[ver] at (1,0.06) {};
   \node[ver] at (2,0.06) {};
   \node      at (1,-1) {1};
   \node      at (2,-1) {2};
   \node      at (3,-1) {3};
   \node      at (4,-1) {4};
   \node      at (5,-1) {5};
   \node      at (6,-1) {6};
   \node      at (7,-1) {7};
   \node      at (8,-1) {8};
   \node      at (9,-1) {9};
   \node      at (10,-1) {A};
   \node      at (11,-1) {B};
   \node      at (12,-1) {C};
   \node      at (13,-1) {D};
   \node      at (14,-1) {E};
   \node      at (15,-1) {F};
   \node      at (16.5,-1) {$\dots$};
   \draw[edg] (3,0) to (5,2);
   \draw[edg] (4,0) to (4.5,0.5) to (5,0);
   \draw[edg] (6,0) to (6.5,0.5);
   \draw[edg] (7,0) to (5,2);
   \draw[edg] (8,0) to (9.5,1.5) to (11,0);
   \draw[edg] (9,0) to (9.5,0.5) to (10,0);
   \draw[edg] (12,0) to (12.5,0.5) to (13,0);
   \draw[edg] (14,0) to (14.5,0.5) to (15,0);
 \end{tikzpicture}
 \caption{The involution $\Psi$.\label{defpsi}}
\end{figure}

To check that we can add the two new edges without creating a crossing in the latter case, observe that 
since $\overline{A} \leq \mathbin{^c\rho}$, and $B_0 \in \mathbin{^c\rho}$, there exists a noncrossing 
partitions obtained from $\overline{A}$ obtained by merging the block containing $\min(B_0)$ with that containing $\max(B_0)$.
This shows that $\Psi$ is a well-defined pairing on the set $\{ A\in\mathcal{A}_n \,:\, \overline{A} \leq \mathbin{^c\rho} \}$. 
An example is given in Figure~\ref{defpsi}, with $B_0 = 3678{\rm BEF}$ (for example).

Also, the number of trees in $\Psi(A)$ is one more or one less than that of $A$, so $(-1)^{\#A} = - (-1)^{\#\Psi(A)}$.
It remains only to show:
\begin{equation} \label{zetawt}
  \zeta^c( \overline{A} , \mathbin{^c\rho} ) \wt(A) = \zeta^c( \overline{\Psi(A)} , \mathbin{^c\rho} ) \wt(\Psi(A)).
\end{equation}
Indeed, $\Psi$ has then all the required properties to show that the right hand side of Equation~\eqref{wrho} vanishes.

We can assume that we are in the first case above, i.e.~$\min(B_0) \overset{ \overline{A} }{\sim}  \max(B_0) $, since the
two cases are exchanged under the involution $\Psi$.

First, if $1\in B_0$, we have:
\[
  \zeta^c( \overline{A} , \mathbin{^c\rho} ) = \zeta^c( \overline{\Psi(A)} , \mathbin{^c\rho} ) .
\]
Indeed, in the product of \eqref{defzeta2}, $B_0$ does not appear since it contains $1$, and the other factors cannot change.
We also have:
\[
  \qquad \wt(A) =  \wt(\Psi(A)).
\]
Indeed, $v_0$ is in the left branch of $A$, so we can remove it without changing the product in \eqref{wttrees}.
So \eqref{zetawt} holds.

Now suppose $1\notin B_0$. We have from \eqref{defzetac2}:
\[
  \zeta^c( \overline{\Psi(A)} , \mathbin{^c\rho} )
  =
  \delta_{ \iota\left(\overline{\Psi(A)} |_{[B_0]} \right) -1 } \zeta^c( \overline{A} , \mathbin{^c\rho} ). 
\]
On the other side, we have:
\[
  \delta_{\cov(v_0)+1} \wt(\Psi(A)) = \wt(A).
\]
Multiplying the previous two equations gives \eqref{zetawt},
at the condition that
\begin{equation}  \label{eqcoviota}
  \cov(v_0) + 1 =  \iota\left( \overline{\Psi(A)} |_{[B_0]} \right) - 1.
\end{equation}
This is therefore the last equality to check to complete the proof of the required properties of $\Psi$, hence of $W_{\rho}=0$.

To prove \eqref{eqcoviota}, let us first check on the example of Figure~\ref{defpsi}. The vertices covered by $v_0$ are $v_1$ and $v_2$,
and the smallest interval partition above $[B_0]$ is $34567|89{\rm AB}|{\rm CD}|{\rm EF}$, so \eqref{eqcoviota} holds.
In general, let $v_0'$ and $v_0''$ be the two descendants of $v_0$.
Then, for each vertex $v$ which is either  $v_0'$ or $v_0''$ or covered by $v_0$, 
consider the tree $T$ of $A$ containing $v$, then denote $B_v$ the set of its leaf labels.
Then it is straightforward to see that the intervals $[B_v]$ form the smallest
interval partition above $\overline{\Psi(A)} | _{[B_0]} $. This proves \eqref{eqcoviota}.

%
%
%
%
%

\setlength{\parindent}{0mm}

\end{document}